\newtheorem{theorem}{Theorem}
\newtheorem{prop}{Proposition}
\newtheorem{defn}{Definition}
\newtheorem{exmp}{Example}[section]
\newtheorem{rmk}{Remark}[section]
\title{Two divisors of $(n^2+1)/2$ summing up to $\delta n + \delta \pm 2$, $\delta$ even}
\author{Sanda Buja\v{c}i\'{c} Babi\'c\footnote{sbujacic@math.uniri.hr, +385 51 584 654. The author is supported by Croatian Science Foundation grant number 6422.} \\ Department of Mathematics,\\ University of Rijeka, Croatia}
\date{}
\begin{document}

\maketitle

\begin{abstract}
We prove there exist infinitely many odd integers $n$ for which there exists a pair of positive divisors $d_1, d_2>1$ of $(n^2+1)/2$ such that $$d_1+d_2=\delta n+(\delta+2).$$
We prove the similar result for $\varepsilon=\delta-2$ and $\delta\equiv4, 6\pmod{8}$ using different approaches and methods.	
\end{abstract}

\begin{section}{Introduction}
A polynomial $f\in\mathbb{C}[x]$ is called \emph{indecomposable} over $\mathbb{C}$ if $$f=g\circ h, \ \ g,h\in\mathbb{C}[x]$$ implies $\deg g=1$ or $\deg h=1$. Ayad \cite{ayad0} gives sufficient conditions for a polynomial $f$ to be indecomposable in terms of its critical points and critical values and conjectures there do not exist two divisors $d_1, d_2>1$ of $(p^2+1)/2$ such that $$d_1+d_2=p+1,$$ $p$ prime.

Ayad and Luca \cite{ayad} deal with a similar, but more general problem and prove that there does not exist an odd integer $n>1$ and two positive divisors $d_1, d_2$ of $(n^2+1)/2$ such that \begin{equation}\label{eq:sazetak1}
d_1+d_2=n+1.
\end{equation}

Finally, Dujella and Luca \cite{dujella} replace the linear polynomial $n+1$ in (\ref{eq:sazetak1}) with an arbitrary linear polynomial $\delta n+\varepsilon$  for $\delta>0$  and $\varepsilon$ given integers and try to answer the question wheather there exist infinitely many positive and odd integers $n$ for which there are two divisors $d_1, d_2$ of $(n^2+1)/2$ such that $$d_1+d_2=\delta n+\varepsilon.$$ The problem can be separated in two cases: it is either $\delta\equiv\varepsilon\equiv1\pmod{2}$ or $\delta\equiv\varepsilon+2\equiv0 \enspace \textrm{or}\enspace 2\pmod{4}$. In \cite{dujella} the authors deal with the case $$\delta\equiv\varepsilon\equiv1\pmod{2}.$$ 

Buja\v{c}i\'{c} Babi\'c \cite{bujacic} deals with the case $$\delta\equiv\varepsilon+2\equiv0 \enspace \textrm{or}\enspace 2\pmod{4},$$ for some fixed $\delta$ or $\varepsilon$. 
\newline\indent One--parametric families of even coefficients $\delta$ and $\varepsilon$ of the linear polynomial $\delta n +\varepsilon$ where $$\varepsilon = \delta \pm 2$$ are discussed in this article. We prove the existence of infinitely many odd integers $n$ for which there exists a pair of positive divisors $d_1, d_2>1$ of $(n^2+1)/2$ such that $$d_1+d_2=\delta n+(\delta+2).$$
We prove the similar result for $\varepsilon=\delta-2$ and $\delta\equiv4, 6\pmod{8}$ using different approaches and methods while the same problem for $\delta\equiv0,2\pmod{8}$ still remains open.
\end{section}

\begin{section}{$d_1+d_2=\delta n+\varepsilon$ for $\varepsilon= \delta+2$}

\begin{theorem}
There are infinitely many odd positive integers $n$ for which there exist divisors $d_1, d_2>1$ of $(n^2+1)/2$ such that $$d_1+d_2=\delta(n+1)+2$$ for every even positive integer $\delta$.
\end{theorem}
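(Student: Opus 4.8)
The plan is to look for the two divisors as a \emph{complementary} pair, imposing simultaneously
\[
d_1+d_2=\delta(n+1)+2,\qquad d_1 d_2=\frac{n^2+1}{2}.
\]
Then $d_1,d_2$ are the roots of $x^2-(\delta(n+1)+2)x+\frac{n^2+1}{2}$, and since their product is exactly $(n^2+1)/2$, every integer solution automatically gives two divisors of $(n^2+1)/2$. The whole problem thus reduces to forcing the discriminant to be a perfect square. Expanding,
\[
\Delta=(\delta(n+1)+2)^2-2(n^2+1)=(n+1)\bigl[(\delta^2-2)(n+1)+4(\delta+1)\bigr],
\]
so I must solve $\Delta=y^2$ in positive integers.

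Setting $u=(\delta^2-2)(n+1)+2(\delta+1)$ and multiplying by $\delta^2-2$ converts $\Delta=y^2$ into the Pell-type equation
\[
u^2-(\delta^2-2)y^2=4(\delta+1)^2 .
\]
Here $\delta^2-2$ is never a perfect square, and $x^2-(\delta^2-2)y^2=1$ carries the explicit unit $(x,y)=(\delta^2-1,\delta)$. Starting from the trivial solution $(u,y)=(2(\delta+1),0)$ (which corresponds to $n+1=0$) and repeatedly composing with this unit produces an infinite sequence $(u_k,y_k)$. Translating back through $m_k:=(u_k-2(\delta+1))/(\delta^2-2)=n_k+1$ yields the linear recurrence
\[
m_{k+1}=2(\delta^2-1)m_k-m_{k-1}+4(\delta+1),\qquad m_0=0,\ m_1=2(\delta+1),
\]
whose terms are automatically integers, since $u_k\equiv 2(\delta+1)\pmod{\delta^2-2}$ for every $k$.

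It remains to check that infinitely many of these solutions are admissible. Reducing the recurrence modulo $2$ and using that $\delta$ is even shows every $m_k$ is even, so each $n_k=m_k-1$ is a positive odd integer; the sequence $y_k$ obeys the pure recurrence $y_{k+1}=2(\delta^2-1)y_k-y_{k-1}$ and likewise stays even, which guarantees that $d_1,d_2=\tfrac12\bigl(\delta m_k+2\mp y_k\bigr)$ are genuine integers. The one real subtlety — and the point I expect to be the main obstacle — is excluding the degenerate factorization $1\cdot\frac{n^2+1}{2}$: the first nontrivial solution $m_1=2(\delta+1)$ gives $n=2\delta+1$ with smaller root $d_1=1$, which is forbidden. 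I would rule this out by noting that the smaller root equals $(n_k^2+1)/(2d_2^{(k)})$, which tends to infinity along the orbit, so $d_1>1$ for all but finitely many $k$ and infinitely many admissible $n$ survive. Concretely, $k=2$ already gives $m_2=4\delta^2(\delta+1)$, i.e.\ $n=4\delta^3+4\delta^2-1$, with the valid pair $d_1=2\delta^2+2\delta+1>1$ and $d_2=4\delta^4+4\delta^3-2\delta^2-2\delta+1$. Hence $\{n_k\}_{k\ge 2}$ furnishes the desired infinite family, uniformly in every even positive integer $\delta$.
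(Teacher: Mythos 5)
Your proposal is correct and follows essentially the same route as the paper: you restrict to the complementary pair $d_1d_2=(n^2+1)/2$ (which is exactly the paper's choice $d=g$ after setting $g=\gcd(d_1,d_2)$) and reduce to the identical Pellian equation $u^2-(\delta^2-2)y^2=4(\delta+1)^2$, solved through the unit $(\delta^2-1,\delta)$ with the same resulting family $n=2\delta+1,\ 4\delta^3+4\delta^2-1,\dots$. Your framing via the discriminant of the Vieta quadratic, and your explicit exclusion of the degenerate root $d_1=1$ at the first step of the orbit, are minor presentational refinements rather than a different method.
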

\begin{proof}
	
Let the integer $\delta$ be even and positive, integer $n$ positive odd and $d_1, d_2>1$ divisors of $(n^2+1)/2$ such that $$d_1+d_2=\delta(n+1)+2.$$
As in \cite{bujacic}, let $g=\textrm{gcd}(d_1, d_2)$. There exists an integer $d$ such that $$d_1d_2=\frac{g(n^2+1)}{2d}.$$
From the identity $$(d_2-d_1)^2=(d_1+d_2)^2-4d_1d_2,$$ we easily get
%$$(d_2-d_1)^2=(\delta(n+1)+2)^2-\frac{2g(n^2+1)}{d},$$
%$$d(d_2-d_1)^2=d(\delta^2(n+1)^2+4\delta(n+1)+4)-2g(n^2+1),$$
\begin{equation}\label{medujedn}
d(d_2-d_1)^2=(\delta^2d-2g)n^2+2d\delta(\delta+2)n+\delta^2d+4d\delta +4d-2g.
\end{equation}
Multiplying (\ref{medujedn}) by $\delta^2d-2g,$ it is obtained
$$d(\delta^2d-2g)(d_2-d_1)^2=$$
$$=(\delta^2d-2g)^2n^2+2d\delta(\delta^2d-2g)(\delta+2)n+\delta^4d^2+4\delta^3d^2+4d^2\delta^2-4\delta^2dg-8d\delta g-8dg+4g^2.$$
After introducing the supstitutions $X=(\delta^2d-2g)n+d\delta(\delta+2), \ Y=d_2-d_1$ the previous equation becomes \begin{equation}\label{section:20}
X^2-d(\delta^2d-2g)Y^2=4\delta^2dg+8d\delta g+8dg-4g^2.\end{equation}

For $d=g$ the right--hand side of $(\ref{section:20})$ becomes a perfect square, so the equation of the form $$X^2-d(\delta^2d-2d)Y^2=(2d(\delta+1))^2$$ is obtained.
After introducing $X=dX'$, we get
\begin{equation}\label{section:21}
X'^2-(\delta^2-2)Y^2=4(\delta+1)^2.
\end{equation} 
For $\delta$ even, $\delta^2-2\equiv2\pmod{4}$ is never a perfect square, so $(\ref{section:21})$ is a pellian equation. If we introduce $X'=2(\delta+1)U$ and $Y=2(\delta+1)V$ and divide (\ref{section:21}) by $(2(\delta+1))^2$, we get 
\begin{equation}\label{section:22}
U^2-(\delta^2-2)V^2=1,
\end{equation}
which is a Pell equation and has infinitely many solutions $(U, V)$. Consequently, the pellian equation $(\ref{section:21})$ has infinitely many solutions $(X', Y)$. 
Since the continuous fraction expansion of $\sqrt{\delta^2-2}$ is $$\sqrt{\delta^2-2}=[\delta-1; \overline{1, \delta-2, 1, 2\delta-2}]$$ 
the fundamental solution of (\ref{section:22}) is %$(U_1, V_1)=(p_3, q_3)$, gdje $\frac{p_i}{q_i}$ označava konvergentu u razvoju u verižni razlomak od $\sqrt{\delta^2-2}$. Iz rekurzivnih relacija (\ref{eq:poqo}) dobivamo $p_3=\delta^2-1$ i 
%$q_3=\delta$. Dakle, fundamentalno rješenje jednažbe (\ref{section:22}) je 
$(U_1, V_1)=(\delta^2-1, \delta)$. All the solutions of (\ref{section:22}) are generated using the recurrence relations of the form
\begin{equation}\label{eq:dodanarek0}U_0=1, \hspace{5pt} U_1=\delta^2-1,\hspace{5pt} U_{m+2}=2(\delta^2-1)U_{m+1}-U_m,\end{equation}
%\vspace{-25pt}
\begin{equation}\label{eq:dodanarek}
V_0=0,\hspace{5pt} V_1=\delta,\hspace{5pt} V_{m+2}=2(\delta^2-1)V_{m+1}-U_m, \hspace{5pt} m\in\mathbb{N}_0.\end{equation}
From $X=2d(\delta+1)U$ and $X=(\delta^2d-2d)n+d\delta(\delta+2)$, it is easily obtained \begin{equation}\label{eq:trecin}
n=\frac{2(\delta+1)U-\delta(\delta+2)}{\delta^2-2}.\end{equation}
Now, we show that (\ref{eq:trecin}) are odd integers. We get $$U_0=1\equiv1\pmod{(\delta^2-2)}, \enspace U_1=\delta^2-1\equiv1\pmod{(\delta^2-2)}.$$ Assume that the congruences $U_{m-1}\equiv U_m\equiv1\pmod{(\delta^2-2)}$ hold. We get 
$$U_{m+1}=2(\delta^2-1)U_m-U_{m-1}\equiv2-1\equiv1\pmod{(\delta^2-2)}.$$
Hence, $$2(\delta+1)U-\delta(\delta+2)\equiv2\delta+2-\delta^2-2\delta\equiv-(\delta^2-2)\equiv0\pmod{(\delta^2-2)}$$ which proves that $n\in\mathbb{Z}$. %The next step is to show that obtained $n\in\mathbb{Z}$ are odd. 
Because $\delta$ is even and $U$ odd, we conclude $$2(\delta+1)U-\delta(\delta+2)\equiv2U\equiv2\pmod{4} \enspace\textnormal{and}\enspace \delta^2-2\equiv2\pmod{4}.$$ Consequently, every integer $n$ of the form (\ref{eq:trecin}) is odd.
\end{proof}

\begin{exmp}
\normalfont We generate integers $n, d_1$ and $d_2$ for each solution $(U_i, V_i), \ i\in\mathbb{N},$ of the equation (\ref{section:22}).
Because of
	\begin{equation}\label{eq:savjet}
	d_1+d_2=\delta n+\delta+2, \enspace d_1d_2=\frac{n^2+1}{2},
	\end{equation}
	$d_1$ and $d_2$ can be interpreted as solutions of the quadratic equation. Using Vieta's formulas we determine expressions for $d_1, d_2$ for each positive odd integer $n$. Namely, $d_1$ and $d_2$ are solutions of the quadratic equation of the form
	$$t^2-(d_1+d_2)t+d_1d_2=0,$$
	%\vspace{-13pt}
	\begin{equation}\label{eq:kvadratna}
	t^2-(\delta n+\delta +2)t+\frac{n^2+1}{2}=0,
	\end{equation}
	where $d_i=t_i,\ i=1, 2$.
	From $(\ref{eq:kvadratna})$ the following is obtained
	\begin{equation}\label{eq:formula}
	t_{1, 2}=\frac{2\delta n+2\delta+4\pm\sqrt{4(\delta n+\delta+2)^2-8(n^2+1)}}{4}.
	\end{equation}
	For $U=U_1=\delta^2-1$, we get $$n=\frac{2(\delta+1)(\delta^2-1)-\delta(\delta+2)}{\delta^2-2}=2\delta+1.$$ 
	Introducing $n=2\delta+1$ into $(\ref{eq:formula})$, we obtain
	$$t_{1,2}=\frac{2\delta  (2\delta+1)+2\delta+4\pm4\delta(1+\delta)}{4}=\frac{4\delta^2+4\delta+4\pm(4\delta^2+4\delta)}{4},$$
	so $$t_1=1, \enspace t_2=2\delta^2+2\delta+1.$$
	For $U=U_2=2\delta^4-4\delta^2+1$, we have $$n=\frac{2(\delta+1)(2\delta^4-4\delta^2+1)-\delta(\delta+2)}{\delta^2-2}=4\delta^3+4\delta^2-1.$$ 
	Introducing $n=4\delta^3+4\delta^2-1$ into $(\ref{eq:formula})$, we get
	$$t_{1, 2}=\frac{8\delta^4+8\delta^3+4\pm8\delta(\delta-1)(\delta+1)^2}{4},$$
	hence $$t_1=2\delta^2+2\delta+1, \enspace t_2=4\delta^4+4\delta^3-2\delta^2-2\delta+1.$$
	Analogously, for $U=U_3=4\delta^6-12\delta^4+9\delta^2-1$, we obtain $$n=\frac{2(\delta+1)(4\delta^6-12\delta^4+9\delta^2-1)-\delta(\delta+2)}{\delta^2-2}=8\delta^5+8\delta^4-8\delta^3-8\delta^2+2\delta+1,$$ and $$t_1=4\delta^4+4\delta^3-2\delta^2-2\delta+1,\enspace t_2=8\delta^6+8\delta^5-12\delta^4-12\delta^3+4\delta^2+4\delta+1.$$

Consequently, we generate infinitely many triples $(n, d_1, d_2)$ of the form
	\[
	\left\{ 
	\begin{array}{l l}
	n=2\delta+1,\\
	d_1=1,\\
	d_2=2\delta^2+2\delta+1.\\
	\end{array} \right.
	\]
	\[
	\left\{ 
	\begin{array}{l l}
	n=4\delta^3+4\delta^2-1 ,\\
	d_1=2\delta^2+2\delta+1,\\
	d_2= 4\delta^4+4\delta^3 - 2\delta^2 - 2\delta + 1.\\
	\end{array} \right.
	\]
	\[
	\left\{ 
	\begin{array}{l l}
	n=8\delta^5 + 8\delta^4 - 8\delta^3 - 8\delta^2 + \delta+ 1,\\
	d_1= 4\delta^4+ 4\delta^3- 2\delta^2- 2\delta+  1,\\
	d_2= 8\delta^6 - 12\delta^4 - 12\delta^3+ 4\delta^2+ 4\delta +  1.\\
	\end{array} \right.
	\] \\
\end{exmp}
\vspace{-25pt}
It is easy to notice that the divisor $d_2$ of $\frac{n^2+1}{2}$ for $n=2\delta+1$ is the divisor $d_1$ of $\frac{n^2+1}{2}$ for $n=4\delta^3+4\delta^2-1$. The divisor $d_2$ of $\frac{n^2+1}{2}$ for $n=4\delta^3+4\delta^2-1$ is the divisor $d_1$ of $\frac{n^2+1}{2}$ for $n=8\delta^5 + 8\delta^4 - 8\delta^3 - 8\delta^2 + \delta+ 1$, etc. 

The quadratic equations of the form  $(\ref{eq:kvadratna})$ that are generated using two integers $n$ formed by two consecutive terms of the recursion sequence $U_m, \ m\geq1$ have a mutual root. We prove that claim.

\begin{defn}
	\textit{Let $f, g\in K[x]$ be two polynomials of the degrees $l$ and $m$, respectively, with coefficients in an arbitrary field $K$, hence
		$$f(x)=a_0x^l+\dots+a_l, \enspace a_0\neq0, \enspace l>0,$$}
	\vspace{-15pt}
	\begin{equation}\label{eq:res01}
	g(x)=b_0x^m+\dots+b_m, \enspace b_0\neq0, \enspace m>0.
	\end{equation}
	The \emph{resultant} \textit{of $f$ and $g$, $\textnormal{Res}(f, g)$, is the determinant $(l+m)\times(l+m)$} of the form\\
	\begin{center}
		$\textnormal{Res}(f, g)=\textrm{det}
		\begin{bmatrix}
		a_0 &  &  &  & b_0 &  &  & \\
		a_1 & a_0 &  &  & b_1 & b_0 & & \\
		a_2 & a_1 & \ddots &  & b_2 & b_1 & \ddots & \\
		\vdots & a_2 & \ddots & a_0 & \vdots & b_2 & \ddots & b_0\\
		a_l & \vdots & \ddots & a_1 & b_m & \vdots & \ddots & b_1\\
		& a_l  &  & a_2 & & b_m & & b_2\\
		&  & \ddots & \vdots & & & \ddots & \vdots\\
		&  &  & a_l & & & & b_m\\
		\end{bmatrix},$\\
	\end{center}
	\textit{where empty spaces stand for zeros.}
\end{defn}

Two polynomials $f, g$ have a common root if and only if resultant $\textnormal{Res}(f, g)=0$.  In our case, the associated quadratic polynomial for $n=2\delta+1$ is
\begin{equation}\label{eq:kv1}
f_1(t)=2t^2-2(2\delta^2+2\delta+2)t+4\delta^2+4\delta+2.\end{equation}
Analogously, for $n=4\delta^3+4\delta^2-1$, we get
\begin{equation}\label{eq:kv2}
f_2(t)=2t^2-2(4\delta^4+4\delta^3+2)t+16\delta^6+32\delta^5+16\delta^4-8\delta^3-8\delta^2+2.
\end{equation}
\noindent The polynomials (\ref{eq:kv1}) and (\ref{eq:kv2}) have one mutual root which implies $\textnormal{Res}(f_1, f_2)=0.$ More precisely,

\begin{scriptsize}
	\begin{center}
		$\begin{vmatrix}
		2 &  0 & 2 & 0\\
		-2(2\delta^2+2\delta+2) & 2 & -2(4\delta^4+4\delta^3+2) & 2\\
		4\delta^2+4\delta+2 & -2(2\delta^2+2\delta+2)& 16\delta^6+32\delta^5+16\delta^4-8\delta^3-8\delta^2+2 & -2(4\delta^4+4\delta^3+2)\\
		0 & 4\delta^2+4\delta+2 & 0 & 16\delta^6+32\delta^5+16\delta^4-8\delta^3-8\delta^2+2\\
		\end{vmatrix}=0.$\\
	\end{center}
\end{scriptsize}

This property holds generally.

\begin{prop}
	\textit{Two quadratic polynomials of the form (\ref{eq:kvadratna}) generated by two integers $n$ determined by two consecutive terms of the recursive sequence $U_m,\enspace m\geq1$, have a mutual root. In other words, for every two integers $n$ that are generated by two consecutive terms of the recursive sequence $U_m,\enspace m\geq1$, the corresponding integers $\frac{n^2+1}{2}$ have a common divisor.}
\end{prop}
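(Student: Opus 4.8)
The plan is to exhibit the common root explicitly and then invoke the resultant criterion. I would begin by returning to the proof of the Theorem, where the substitution $Y=d_2-d_1=2(\delta+1)V$ was used alongside $d_1+d_2=\delta n+\delta+2$. Substituting the formula (\ref{eq:trecin}) for $n$ into the sum and simplifying, the linear part collapses and one gets $d_1+d_2=\frac{2[\delta(\delta+1)U-(\delta+2)]}{\delta^2-2}$. Writing $n_m$ for the integer produced by $U_m$ via (\ref{eq:trecin}) and combining the half-sum with the half-difference $(\delta+1)V_m$, I obtain closed forms for the two divisors attached to $U_m$:
$$d_{1,2}^{(m)}=\frac{\delta(\delta+1)U_m-(\delta+2)}{\delta^2-2}\mp(\delta+1)V_m.$$

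The example computations single out the pattern that should persist: the larger root for $U_m$ equals the smaller root for $U_{m+1}$, i.e.\ the common root of $f_m$ and $f_{m+1}$ ought to be $d_2^{(m)}=d_1^{(m+1)}$. The second step is therefore to equate these two closed forms. The additive constant $-(\delta+2)/(\delta^2-2)$ and the common factor $(\delta+1)$ drop out, and after clearing the denominator $\delta^2-2$ the whole assertion reduces to the single polynomial identity
$$\delta(U_{m+1}-U_m)=(\delta^2-2)(V_{m+1}+V_m),\qquad m\geq1.$$

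The heart of the argument, and the step I expect to demand the most care, is establishing this identity for every $m$. My approach is to observe that $U_m$ and $V_m$ obey the \emph{same} second-order linear recurrence $w_{m+2}=2(\delta^2-1)w_{m+1}-w_m$; consequently the combinations $U_{m+1}-U_m$ and $V_{m+1}+V_m$, and hence both sides $\delta(U_{m+1}-U_m)$ and $(\delta^2-2)(V_{m+1}+V_m)$, satisfy that same recurrence. Two sequences governed by one fixed order-two recurrence coincide as soon as they agree at two consecutive indices, so it suffices to verify the identity at $m=1$ and $m=2$ using $U_1=\delta^2-1,\ U_2=2\delta^4-4\delta^2+1,\ U_3=4\delta^6-12\delta^4+9\delta^2-1$ and $V_1=\delta,\ V_2=2\delta^3-2\delta,\ V_3=4\delta^5-8\delta^3+3\delta$, which is a direct computation. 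Equivalently, writing $\alpha=\delta^2-1+\delta\sqrt{\delta^2-2}$ for the fundamental unit and $\bar\alpha$ for its conjugate, the Binet expressions $U_m=(\alpha^m+\bar\alpha^m)/2$ and $V_m=(\alpha^m-\bar\alpha^m)/(2\sqrt{\delta^2-2})$ reduce both sides to $\frac{\delta}{2}\big[(\delta^2-2)(\alpha^m+\bar\alpha^m)+\delta\sqrt{\delta^2-2}(\alpha^m-\bar\alpha^m)\big]$, giving the identity at once and avoiding the induction entirely.

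With $d_2^{(m)}=d_1^{(m+1)}$ secured, this common value is simultaneously a root of $f_m$ and of $f_{m+1}$, so by the resultant criterion stated after Definition~1 we have $\textnormal{Res}(f_m,f_{m+1})=0$, generalizing the explicit $4\times4$ determinant computed for $f_1,f_2$. Finally, since every root of $f_m$ is a divisor of $(n_m^2+1)/2$ and every root of $f_{m+1}$ is a divisor of $(n_{m+1}^2+1)/2$, the shared value $d_2^{(m)}=d_1^{(m+1)}$ is a common divisor of $(n_m^2+1)/2$ and $(n_{m+1}^2+1)/2$, which is exactly the conclusion of the Proposition.
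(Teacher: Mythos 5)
Your proof is correct, but it follows a genuinely different route from the paper's. The paper evaluates the $4\times4$ resultant $\textnormal{Res}(f_{m-1},f_m)$ symbolically, reduces its vanishing to the relation $\delta^4-2\delta^2(U_mU_{m-1}+1)+(U_m+U_{m-1})^2=0$ between consecutive terms, and proves that relation by induction on $m$ using the recurrence (\ref{eq:dodanarek0}); it never names the common root. You instead exhibit the common root explicitly, showing $d_2^{(m)}=d_1^{(m+1)}$ via the identity $\delta(U_{m+1}-U_m)=(\delta^2-2)(V_{m+1}+V_m)$, which you settle either by the two-term-agreement argument for solutions of a fixed second-order recurrence or by Binet's formula. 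Your version is more informative: it proves the chaining pattern observed after the Example ($d_2$ for one $n$ reappearing as $d_1$ for the next) in full generality, rather than only the existence of some common root, and it avoids the heavy determinant computation. The one step you should make explicit is that your closed forms $\frac{\delta(\delta+1)U_m-(\delta+2)}{\delta^2-2}\mp(\delta+1)V_m$ really are the two roots of (\ref{eq:kvadratna}) for $n=n_m$: the sum comes out to $\delta n_m+\delta+2$ by direct simplification, but verifying that the product equals $(n_m^2+1)/2$ (equivalently, that the discriminant of (\ref{eq:kvadratna}) equals $4(\delta+1)^2V_m^2$) requires invoking the Pell relation $U_m^2-(\delta^2-2)V_m^2=1$; this is a short computation, but without it the phrase ``combining the half-sum with the half-difference'' leaves a small hole. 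The paper's route, by contrast, needs only the $U$-sequence and no identification of the roots, at the cost of a bulkier algebraic verification.
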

\noindent \textit{Proof}.

Let $U_{m-1},\enspace U_m$ be two consecutive terms of the recursive sequence (\ref{eq:dodanarek0}) that we use for generating two integers $n$ of the form (\ref{eq:trecin}). For each such integer $n$, from (\ref{eq:kvadratna}), we get two quadratic polynomials of the form:
\begin{equation}\label{eq:kv3}
f_{m-1}(t)=2t^2-2\left(\delta\frac{2(\delta+1)U_{m-1}-\delta(\delta+2)}{\delta^2-2}+\delta+2\right)t+\left(\frac{2(\delta+1)U_{m-1}-\delta(\delta+2)}{\delta^2-2}\right)^2+1,
\end{equation}
\begin{equation}\label{eq:kv4}
f_m(t)=2t^2-2\left(\delta\frac{2(\delta+1)U_{m}-\delta(\delta+2)}{\delta^2-2}+\delta+2\right)t+\left(\frac{2(\delta+1)U_{m}-\delta(\delta+2)}{\delta^2-2}\right)^2+1.
\end{equation}
The resultant $\textnormal{Res}(f_{m-1}, f_m)$ is
\begin{tiny}
	\begin{center}
		$\begin{vmatrix}
		2 &  0 & 2 & 0\\
		-2\left(\delta\frac{2(\delta+1)U_{m-1}-\delta(\delta+2)}{\delta^2-2}+\delta+2\right) & 2 & -2\left(\delta\frac{2(\delta+1)U_{m}-\delta(\delta+2)}{\delta^2-2}+\delta+2\right) & 2\\
		\left(\frac{2(\delta+1)U_{m-1}-\delta(\delta+2)}{\delta^2-2}\right)^2+1 & -2\left(\delta\frac{2(\delta+1)U_{m-1}-\delta(\delta+2)}{\delta^2-2}+\delta+2\right) & \left(\frac{2(\delta+1)U_{m}-\delta(\delta+2)}{\delta^2-2}\right)^2+1 & -2\left(\delta\frac{2(\delta+1)U_{m}-\delta(\delta+2)}{\delta^2-2}+\delta+2\right)\\
		0 & \left(\frac{2(\delta+1)U_{m-1}-\delta(\delta+2)}{\delta^2-2}\right)^2+1 & 0 & \left(\frac{2(\delta+1)U_{m}-\delta(\delta+2)}{\delta^2-2}\right)^2+1\\
		\end{vmatrix}=$\\
	\end{center}
\end{tiny}
$$=\frac{64(1+\delta^4)(U_m-U_{m-1})^2(\delta^4+(U_m+U_{m-1})^2-2\delta^2(1+U_mU_{m-1}))}{(\delta^2-2)^4}.$$
$\textnormal{Res}(f_{m-1}, f_m)=0$ if and only if 
\begin{equation}\label{eq:matind}
\delta^4-2\delta^2(U_mU_{m-1}+1)+(U_m+U_{m-1})^2=0.
\end{equation}
Using mathematical induction on $m$ we show $(\ref{eq:matind})$. By definition, we have $U_0=1$. 
Introducing $U_0, \enspace U_1$ into the previous equation, we get
$$\delta^4-2\delta^2+(2-2\delta^2)(\delta^2-1)+1+(\delta^2-1)^2=\delta^4-2\delta^4-2+2\delta^2+1+\delta^4-2\delta^2+1=0.$$
%Prethodna jednakost je jednaka nuli ako i samo ako vrijedi $$U_1=\delta^2-1$$ što je u našem slučaju i zadovoljeno.
\noindent We assume $$\delta^4-2\delta^2+(2-2\delta^2)U_{m-1}U_m+U_{m-1}^2+U_m^2=0.$$
%\noindent The last step is to prove
%$$\delta^4-2\delta^2+(2-2\delta^2)U_mU_{m+1}+U_{m+1}^2+U_m^2=0.$$
The recursive sequence for $U_m$ is $$U_{m+1}=2(\delta^2-1)U_m-U_{m-1}, \enspace m\in\mathbb{N},$$
so, we easily get
$$\delta^4-2\delta^2+(2-2\delta^2)U_m(2(\delta^2-1)U_m-U_{m-1})+(2(\delta^2-1)U_m-U_{m-1})^2+U_m^2=0,$$
%\vspace{-12pt}
%$$\delta^4-2\delta^2(2\delta^2U_m-2U_m-U_{m-1})U_m-2\delta^2+4(\delta^2-1)^2U_m^2-$$
%$$-4(\delta^2-1)U_{m-1}U_m+U_{m-1}^2+2(2\delta^2U_m-2U_m-U_{m-1})U_m+U_m^2=0.$$
\noindent or
$$\delta^4-2\delta^2+(2-2\delta^2)U_{m-1}U_m+U_{m-1}^2+U_m^2=0,$$
which is satisfied according to the hypothesis of the mathematical induction. 
\begin{flushright}
	$\square$
\end{flushright}
\begin{exmp}
\normalfont	

	Let $\delta=8$. We get
	$$(n, \frac{n^2+1}{2}, d_1, d_2, \delta, \varepsilon)=(17, 145, 1, 145, 8, 10), \ (2303, 2651905, 145, 18289, 8, 10),... $$

\noindent	Let $\delta=10$. We get
	$$(n, \frac{n^2+1}{2}, d_1, d_2, \delta, \varepsilon)=(21, 221, 1, 221, 10, 12), \ (4399, 9675601, 221, 43781, 10, 12),... $$
\end{exmp}

\end{section}

\section{$d_1+d_2=\delta n+\varepsilon$ for $\varepsilon=\delta-2$}

\indent In this section, we assume coefficients $\delta$ and $\varepsilon$ of the linear polynomial $\delta n+\varepsilon$ are even and $\varepsilon=\delta-2$. Our goal is to show that there exist infinitely many positive integers $n$ such that two divisors $d_1, d_2$ of $(n^2+1)/2$ satisfy
\begin{equation}\label{eq:svojstvo}
	d_1+d_2=\delta n+\delta-2.
\end{equation} 

%\textcolor{red}{U dokazivanju tvrdnji problem svodimo na određivanje rješenja pellovske jednadžbe čija je %No, dokazi svih teorema u ovom potpoglavlju bitno su drugačiji od svih prethodnih dokaza u radu. 
%pripadna Pellova jednadžba oblika $$U^2-2abcV^2=1.$$ Tu jednadžbu prikazujemo u obliku $$(U-1)(U+1)=2abcV^2$$ te uvjetujemo da su $a, b, c$ prosti brojevi što možemo učiniti uz uvjet da vrijedi Schinzelova hipoteza H.  Prvo određujemo sve mogućnosti za navedenu faktorizaciju, a onda koristeći dodatne uvjete na Legendreove i Jacobijeve simbole, odabiremo samo one faktorizacije koje će nam dati beskonačno brojeva $n$ s traženim svojstvima. Kao i u prethodnom dijelu rada, dobivamo beskonačno mnogo Pellovih jednadžbi što rezultira i s beskonačno mnogo neparnih prirodnih brojeva $n$ za koje postoje djelitelji $d_1, d_2$ od $(n^2+1)/2$ za koje vrijedi $(\ref{eq:svojstvo})$.
%U ovom potpoglavlju uvjetno je dokazano da za svaki $\delta \equiv 4, 6\pmod{8}$ postoji beskonačno mnogo neparnih prirodnih brojeva $n$ sa svojstvom $(\ref{eq:svojstvo})$, uz uvjet da vrijedi Schinzelova hipoteza H. U slučajevima u kojima je $\delta\equiv0, 2\pmod{8}$ navedenim metodama ne možemo dokazati analognu tvrdnju pa navedeni problemi ostaju otvoreni za buduće promatranje. U potpoglavlju navodimo i razloge zbog kojih primjenjivane metode za slučajeve $\delta\equiv0,2\pmod{8}$ ne daju rezultate.}\\

Like in the previous section, we set $g=\textnormal{gcd}(d_1, d_2)$. There exists $d\in\mathbb{N}$ such that $$d_1d_2=\frac{g(n^2+1)}{2d}.$$ It is easily obtained $g\equiv d\equiv d_1\equiv d_2\equiv1\pmod{4}$. From the identity $$(d_2-d_1)^2=(d_1+d_2)^2-4d_1d_2,$$
we get the equation
\begin{equation}{\label{eq:210}}X^2-d(d\delta^2-2g)Y^2=2dg(\delta^2+\varepsilon^2)-4g^2,\end{equation}
after introducing the supstitutions of the form $X=n(d\delta^2-2g)+d\delta\varepsilon$ and $Y=d_2-d_1$.

Because $g\mid(\delta^2n^2-\varepsilon^2)$ and $g\mid\delta^2(n^2+1)$, we conclude 
$$g\mid(\delta^2+\varepsilon^2).$$
For integers $\delta, \varepsilon$ even we get $\delta^2+\varepsilon^2\equiv0\pmod{4}$. Generally, because $g\equiv1\pmod{4}$, we conclude $g\mid\frac{\delta^2+\varepsilon^2}{4}$. For $\varepsilon=\delta-2$, we get $$g\hspace{2pt}\big|\hspace{2pt}\frac{\delta^2-2\delta+2}{2}.$$
Setting $g=\frac{\delta^2+\varepsilon^2}{4}=\frac{\delta^2-2\delta+2}{2}$ the equation $(\ref{eq:210})$ becomes 
\begin{equation}\label{eq:220}X^2-d(d\delta^2-2g)Y^2=4g^2(2d-1).\end{equation}
For $d=2k^2-2k+1, \enspace k\in\mathbb{N},$ the right-hand side of the equation $(\ref{eq:220})$ is a prefect square. %Neka je $d=2k^2-2k+1, \enspace k\in\mathbb{N}$.
%U tom slučaju vrijedi $$2d-1=2(2k^2-2k+1)-1=4k^2-4k+1=(2k-1)^2.$$
More precisely,
%\begin{equation*}X^2-d(d\delta^2-2g)Y^2=(2g(2k-1))^2, \enspace k\in\mathbb{N}.\end{equation*}
%Because
%$$d\delta^2-2g=(2k^2-2k+1)\delta^2-(\delta^2-2\delta+2)=2(\delta k-1)(\delta k-\delta +1),$$ $(\ref{eq:220})$ becomes
\begin{equation}\label{eq:28}
	X^2-2(2k^2-2k+1)(\delta k-1)(\delta k-\delta +1)Y^2=(2g(2k-1))^2.
\end{equation}
The associated Pell equation of $(\ref{eq:28})$ is
\begin{equation}\label{eq:29}
	U^2-2(2k^2-2k+1)(\delta k-1)(\delta k-\delta +1)V^2=1.
\end{equation}

Because the period length of the continued fraction expansion of $$\sqrt{2(2k^2-2k+1)(\delta k-1)(\delta k-\delta +1)}$$ depends on $k\in\mathbb{N}$, the approach that we have used in the previous section cannot be used again. So, in this case, we have to construct another method  and search for the solutions $(X, Y)=(2g(2k-1)U, \enspace 2g(2k-1)V)$ of the equation (\ref{eq:28}), where $(U, V)$ are solutions of the equation (\ref{eq:29}). The solutions of the equation $(\ref{eq:28})$ have to satisfy the additional condition, 
\begin{equation}\label{eq:lalala}
X\equiv d\delta\varepsilon\equiv d\delta(\delta-2)\pmod{2(\delta k-1)(\delta k-\delta+1)},\end{equation} 
in order to fulfill the request for $n$ to be an integer. 

We set $$a=2k^2-2k+1, \enspace b=\delta k-1, \enspace c=\delta k-\delta+1.$$
The equation $(\ref{eq:29})$  becomes
\begin{equation}\label{eq:uh}
U^2-2abcV^2=1.
\end{equation}
The fundamental solution $(U_0, V_0)$ of that equation satisfies
%$$U_0^2-1=2abcV_0^2,$$
$$(U_0-1)(U_0+1)=2abcV_0^2.$$
It is easy to conclude $4\mid(U_0-1)(U_0+1)$ and
% because $U_0$ is odd, and, consequently, we can also notice that $4\mid2abcV_0^2$. It is obvious that $a, b, c$ are odd, and for $2\mid V_0^2$ we get that 
$V_0$ is even. So, we set $V_0=2st, \enspace s, t\in\mathbb{N}$. The previous equation becomes
$$(U_0-1)(U_0+1)=8abcs^2t^2.$$

If we assume $a, b, c$ are prime, number of factorizations of the equation (\ref{eq:uh}) is the least possible.
In that case and because $a, b, c\neq 2$, we deal only with the following factorizations
\begin{center}
$1^{\pm})\enspace\enspace U_0\pm1=2abcs^2, \hspace{5pt} U_0\mp1=2^2t^2,$\\
$2^{\pm}) \enspace\enspace U_0\pm1=2^2abcs^2, \hspace{5pt} U_0\mp1=2t^2,$\\
$3^{\pm}) \enspace\enspace U_0\pm1=2abs^2, \hspace{5pt} U_0\mp1=2^2ct^2,$\\
$4^{\pm}) \enspace\enspace U_0\pm1=2acs^2, \hspace{5pt} U_0\mp1=2^2bt^2,$\\
$5^{\pm}) \enspace\enspace U_0\pm1=2bcs^2, \hspace{5pt} U_0\mp1=2^2at^2,$\\
$6^{\pm}) \enspace\enspace U_0\pm1=2as^2, \hspace{5pt} U_0\mp1=2^2bct^2,$\\
$7^{\pm}) \enspace\enspace U_0\pm1=2bs^2, \hspace{5pt} U_0\mp1=2^2act^2,$\\
$8^{\pm}) \enspace\enspace U_0\pm1=2cs^2, \hspace{5pt} U_0\mp1=2^2abt^2.$\\
\end{center}
From (\ref{eq:29}) we get $U_0^2\equiv1\pmod{(\delta k-1)}$ and  $U_0^2\equiv1\pmod{(\delta k-\delta+1)}$, so we assume
\begin{equation}\label{eq:evoevo}
U_0\equiv -1\pmod{(\delta k-1)}, \enspace U_0\equiv1\pmod{(\delta k-\delta+1)}.
\end{equation}
%Definiramo $$D:=d\delta^2-2g=2(\delta k-1)(\delta k-\delta+1).$$
We easily get
$$X_0=2g(2k-1)U_0\equiv d\delta(\delta-2)\pmod{(\delta k-1)}.$$
%$$\equiv 2d\delta(\delta k-\delta+1)+d\delta(\delta-2)\equiv d\delta(\delta-2)\pmod{(\delta k-\delta +1)},$$
%$$X_0=2g(2k-1)U_0\equiv-2g(2k-1)\equiv -d\delta^2(2k-1)\equiv$$
%$$-d(2\delta(\delta k-1)-\delta(\delta-2))\equiv d\delta(\delta-2)\pmod{(\delta k-1)}.$$
Because $X_0\equiv d\delta(\delta-2)\pmod{(\delta k-1)(\delta k-\delta+1)}$ and $X_0\equiv d\delta(\delta-2)\equiv0\pmod{2},$ we get
$$X_0\equiv d\delta(\delta-2)\pmod{2(\delta k-1)(\delta k-\delta+1)},$$ so $(\ref{eq:lalala})$ is satisfied. Methods that we use in this section depend on the residue classes modulo $8$ for $\delta$ even, so we deal with each of the four cases separately.\\

\begin{subsection}{$\delta\equiv4\pmod{8}$}
\indent We set $\delta\equiv4\pmod{8}$ and $k\equiv3\pmod{8}$. We obtain

$$a=2k^2-2k+1\equiv5\pmod{8},$$
\vspace{-15pt}
$$b=\delta k-1\equiv3\pmod{8},$$
\vspace{-15pt}
\begin{equation}\label{eq:kongru}
c=\delta k-\delta+1\equiv1\pmod{8}.
\end{equation}

\noindent We prove that there exist infinitely many integers $k$ such that only factorizations $4^-)$ and $7^+)$ are possible. That condition implies that congruence (\ref{eq:lalala}) holds and, consequently, that $(X, Y)$ are integer solutions of (\ref{eq:28}). We deal with each factorization separately. \\

\noindent ${1^+) \enspace U_0+1=2abcs^2, \enspace U_0-1=2^2t^2}$.\\
From $abcs^2-2t^2=1$ we get $7s^2-2t^2\equiv1\pmod{8}$  which does not hold for any $s, t\in\mathbb{Z},$ so this factorization is not possible for $\delta\equiv4\pmod{8}$ and $k\equiv3\pmod{8}$.\\\\
$1^-) \enspace U_0+1=2^2t^2, \enspace U_0-1=2abcs^2$.\\
From $2t^2-abcs^2=1$ we get the congruence $2t^2-7s^2\equiv1\pmod{8}$ that is solvable for $t\equiv0 \pmod{2}$ and $s\equiv1\pmod{2}$. Setting the condition $$\left(\frac{2}{a}\right)=\left(\frac{2}{b}\right)=-1,$$ makes this factorization  impossible.\\\\
$2^+) \enspace U_0+1=2^2abcs^2, \enspace U_0-1=2t^2$.\\
We get $7t^2-2s^2\equiv1\pmod{8}$ which is not satisfied for any $s, t\in\mathbb{Z}$.\\\\
$2^-) \enspace U_0+1=2t^2, \enspace U_0-1=2^2abcs^2$.\\
The equation $t^2-2abcs^2=1$ is a contradiction with minimality of the fundamental solution $(U_0, V_0)$.\\\\
$3^+) \enspace U_0+1=2abs^2, \enspace U_0-1=2^2ct^2$.\\
We get $7s^2-2t^2\equiv1\pmod{8}$ which is not satifsied for any $s, t\in\mathbb{Z}$.\\\\
$3^-) \enspace U_0+1=2^2ct^2, \enspace U_0-1=2abs^2$.\\
The congruence $2t^2-7s^2\equiv1\pmod{8}$ is satisfied for $t\equiv0\pmod{2}$ and $s\equiv1\pmod{2}$, so we set the condition   $\left(\frac{2c}{a}\right)=-1$ or $\left(\frac{2c}{b}\right)=-1$ that makes the congruence $2t^2-7s^2\equiv1\pmod{8}$ unsolvable. \\ \indent We get
$$\left(\frac{2c}{a}\right)=\left(\frac{2}{a}\right)\left(\frac{c}{a}\right)=-\left(\frac{c}{a}\right)=-1 \enspace \Rightarrow \enspace \left(\frac{c}{a}\right)=1,$$
or $$\left(\frac{2c}{b}\right)=\left(\frac{2}{b}\right)\left(\frac{c}{b}\right)=-\left(\frac{c}{b}\right)=-1 \enspace \Rightarrow \enspace \left(\frac{c}{b}\right)=1.$$
$4^+) \enspace U_0+1=2acs^2, \enspace U_0-1=2^2bt^2$.\\
We deal with the congruence $5s^2-6t^2\equiv1\pmod{8}$ which is not satisfied for any $s, t\in\mathbb{Z}$. \\\\
$4^-) \enspace U_0+1=2^2bt^2, \enspace U_0-1=2acs^2$.\\
We get $6t^2-5s^2\equiv1\pmod{8}$ which is satisfied for $s\equiv t\equiv1\pmod{2}$, so we establish a condition that would make this case impossible. For 
$$\left(\frac{2b}{a}\right)=\left(\frac{2}{a}\right)\left(\frac{b}{a}\right)=-\left(\frac{b}{a}\right)=-1 \enspace \Rightarrow \enspace \left(\frac{b}{a}\right)=1,$$
or $$\left(\frac{2b}{c}\right)=\left(\frac{2}{c}\right)\left(\frac{b}{c}\right)=\left(\frac{b}{c}\right)=-1 \enspace \Rightarrow \enspace \left(\frac{b}{c}\right)=-1$$ the congruence is unsolvable.\\\\
$5^+) \enspace U_0+1=2bcs^2, \enspace U_0-1=2^2at^2$.\\
It is obtained $3s^2-2t^2\equiv1\pmod{8}$ which is satisfied for $s\equiv t\equiv1\pmod{2}$. If we want that the congruence does not have any solutions, one of the following conditions has to be satisfied 
$$\left(\frac{-2a}{b}\right)=\left(\frac{-1}{b}\right)\left(\frac{2}{b}\right)\left(\frac{a}{b}\right)=\left(\frac{a}{b}\right) \enspace \Rightarrow \enspace \left(\frac{a}{b}\right)=-1$$
or
$$\left(\frac{-2a}{c}\right)=\left(\frac{-1}{c}\right)\left(\frac{2}{c}\right)\left(\frac{a}{c}\right)=\left(\frac{a}{c}\right) \enspace \Rightarrow \enspace \left(\frac{a}{c}\right)=-1.$$
$5^-) \enspace U_0+1=2^2at^2, \enspace U_0-1=2bcs^2$.\\
The congruence $2t^2-3s^2\equiv1\pmod{8}$ is not satisfied for any $s, t\in\mathbb{Z}$, so this factorization is not possible.\\\\
$6^+) \enspace U_0+1=2as^2, \enspace U_0-1=2^2bct^2$.\\
The congruence $2t^2-3s^2\equiv1\pmod{8}$ is not satisfied for any $s, t\in\mathbb{Z}$.\\\\
$6^-) \enspace U_0+1=2^2bct^2, \enspace U_0-1=2as^2$.\\
From $2bct^2-as^2=1$ we get $6t^2-5s^2\equiv1\pmod{8}$ which is satisfied for $s\equiv t\equiv1\pmod{2}$. Conditions that make this case impossible are
$$\left(\frac{-a}{b}\right)=-1 \enspace \Rightarrow \enspace \left(\frac{a}{b}\right)=1$$
or
$$\left(\frac{-a}{c}\right)=-1 \enspace \Rightarrow \enspace  \left(\frac{a}{c}\right)=-1.$$
$7^+) \enspace U_0+1=2bs^2, \enspace U_0-1=2^2act^2$.\\
From $bs^2-2act^2=1$, we get $3s^2-2t^2\equiv1\pmod{8}$, which is satisfied for $s\equiv t\equiv1\pmod{2}$.
The congruence $3s^2-2t^2\equiv1\pmod{8}$ does not have any solutions in $s$ and $t$ for
$$\left(\frac{b}{a}\right)=-1$$
or
$$\left(\frac{b}{c}\right)=-1.$$
$7^-) \enspace U_0+1=2^2act^2, \enspace U_0-1=2bs^2$.\\
The equation $2act^2-bs^2=1$ implies $2t^2-3s^2\equiv1\pmod{8}$ which is not satisfied for any $s, t\in\mathbb{Z}$.\\\\
$8^+) \enspace U_0+1=2cs^2, \enspace U_0-1=2^2abt^2$.\\
From the equation $cs^2-2abt^2=1$ we get $s^2-6t^2\equiv1\pmod{8}$ which is solvable for $s\equiv1\pmod{2}$ and $t\equiv0\pmod{2}$. The congruence does not have any solutions if one of the following condition is satisfied $$\left(\frac{c}{a}\right)=-1$$
or
$$\left(\frac{c}{b}\right)=-1.$$
$8^-) \enspace U_0+1=2^2abt^2, \enspace U_0-1=2cs^2$.\\
We get $2abt^2-cs^2=1$ that implies $6t^2-s^2\equiv1\pmod{8}$ which is not satisfied for any $s, t\in\mathbb{Z}$.\\\\
\indent From the above observations we notice that factorizations $3^-), 4^-), 5^+), 6^-), 7^+), 8^+)$ are possible. If we set conditions $$\left(\frac{a}{c}\right)=\left(\frac{c}{a}\right)=-1 \enspace\enspace \textnormal{and} \enspace\enspace \left(\frac{c}{b}\right)=\left(\frac{b}{c}\right)=1,$$
the only possible factorizations are $4^-)$ and $7^+)$. For $\left(\frac{b}{a}\right)=\left(\frac{a}{b}\right)=-1$ the only possible case is $4^-)$ and for $\left(\frac{b}{a}\right)=\left(\frac{a}{b}\right)=1$ the only possible case is $7^+)$.\\

We prove conditionally that the determined conditions can be fulfilled if a famous conjecture is satisfied.

Let $k$ be an integer that satisfies the following conditions:\\

\noindent(i)\enspace $k\equiv3\pmod{8}$,\\\\
(ii)\enspace $\left(\frac{\delta k-\delta+1}{A}\right)=-1$ for $A=\frac{\delta^2}{2}-\delta+1$,\\\\
(iii)\enspace $\left(\frac{\delta k-\delta+1}{B}\right)=1$ for $B=\frac{\delta}{2}-1$,\\\\
(iv)\enspace $a=2k^2-2k+1$ is prime,\\\\
(v) \enspace $b=\delta k-1$ is prime,\\\\
(vi)\enspace $c=\delta k-\delta+1$ is prime.\\\\
\indent The condition (i) implies that $a, b, c$ defined in (iv), (v), (vi) are of the form
$$a\equiv5\pmod{8}, \enspace b\equiv3\pmod{8}, \enspace c\equiv1\pmod{8}.$$
We show that the condition (ii) is equivalent to $\left(a/c\right)=-1$ and the condition (iii) is equivalent to $\left(b/c\right)=1$. More precisely, we have
$$\left(\frac{a}{c}\right)=\left(\frac{2k^2-2k+1}{\delta k-\delta+1}\right)=\left(\frac{2\delta^2k^2-2\delta^2k+\delta^2}{\delta k-\delta+1}\right)=\left(\frac{2\delta k(\delta k-\delta+1)-2\delta k+\delta^2}{\delta k-\delta+1}\right)=$$
$$=\left(\frac{-2\delta k+\delta^2}{\delta k-\delta+1}\right)=\left(\frac{-2(\delta k-\delta+1)-2\delta+2+\delta^2}{\delta k-\delta+1}\right)=$$
$$=\left(\frac{\delta^2/2-\delta+1}{\delta k-\delta+1}\right)=\left(\frac{\delta k-\delta+1}{\delta^2/2-\delta+1}\right)=\left(\frac{c}{A}\right),$$ 
for $A=\frac{\delta^2}{2}-\delta+1$.\\
\indent Furthermore, we get\\
$$\left(\frac{c}{b}\right)=\left(\frac{\delta k-\delta+1}{\delta k-1}\right)=\left(\frac{\delta k-1}{\delta k-\delta+1}\right)=\left(\frac{\delta k-\delta+1+\delta-2}{\delta k-\delta+1}\right)=\left(\frac{\delta-2}{\delta k-\delta+1}\right)=$$
$$=\left(\frac{\delta/2-1}{\delta k-\delta+1}\right)=\left(\frac{\delta k-\delta+1}{\delta/2-1}\right)=\left(\frac{c}{B}\right),$$
for $B=\delta/2-1$.\\\\
\indent We check if conditions (i), (ii) and (iii) can be simultaneously satisfied. It can be easily shown that $\textrm{gcd}(AB, \delta)=1$. \\
\indent Let $$A=p_1^{a_1}p_2^{a_2}\cdot\dots\cdot p_r^{a_r}$$ be canonical prime factorization of $A$. We have $A\equiv5\pmod{8}$, so  $A$ is not a perfect square. Furthermore, 
\begin{equation}\label{eq:djelivosta1}
A\not\equiv0\pmod{3}.
\end{equation} Because $A$ is not a perfect square, some of its exponents $a_i$ in the canonical prime factorization are odd. Without a loss of generality, let $a_1$ be odd and let $x_1$ be some quadratic nonresidue modulo $p_1$. Because $p_1\geq5$, there are $(p_1-1)/2\geq2$ quadratic nonresidues modulo $p_1$, so we choose $x_1$ such that $$x_1\not\equiv2-\delta\pmod{p_1}.$$

Because $\textrm{gcd}(A, B)=1$ according to Chinese Remainder Theorem we conclude that there exist infinitely many integers that satisfy the congruences $$x\equiv x_1\pmod{p_1}, \enspace x\equiv 1 \pmod{p_i}, \enspace i=2, \dots r, \ \ \ x\equiv 1\pmod{B}.$$
We define $k$ as $$k=\frac{x+\delta-1}{\delta}.$$
For $k\equiv3\pmod{8}$ we conclude $x$ satisfies $$x=\delta k-\delta+1=\delta(8l+3)-\delta+1\equiv 2\delta+1\pmod{8\delta}.$$
%$$x\equiv2\delta-1\pmod{8\delta}.$$
Because $\textrm{gcd}(AB, 8\delta)=1$, the system of congruences
$$x\equiv x_1\pmod{p_1}, \enspace x\equiv1\pmod{p_i}, \enspace i=2, \dots, r,$$
$$x\equiv1\pmod{B}, \enspace x\equiv2\delta+1\pmod{8\delta}$$
is solvable. If $x_0$ is one solution of the above system, then all solutions $x$ are of the form
$$x\equiv x_0\pmod{8p_1\dots p_rB\delta}.$$
Obviously, all the solutions of the mentioned system satisfy the conditions
$$\left(\frac{x}{A}\right)=\left(\frac{x_1}{p_1}\right)=-1 \enspace\enspace \textrm{and} \enspace\enspace \left(\frac{x}{B}\right)=\left(\frac{1}{B}\right)=1,$$
and are of the form $x=\delta k-\delta+1$, $k\equiv3\pmod{8}.$ This shows us the conditions (i), (ii) and (iii) are simultaneously satisfied. 

It remains to answer whether conditions (iv), (v) and (vi) can be simultaneously satisfied while conditions (i), (ii) and (iii) are fulfilled, too. In order to answer that question, we use Schinzel's hypothesis H \cite{schinzel}. 

\begin{theorem}[Schinzel's Hypothesis $H$]\label{eq:slutnja}
	\textit{Suppose $f_1(x), \dots, f_m(x)$ are polynomials with integer coefficients. If the following conditions are satisfied
		\begin{itemize}
			\item $f_i(x)$ are irreducible for $i=1, 2, \dots, m$,
			\item for every prime $p$ there exists a positive integer $n$ for which $$f_1(n)f_2(n)\dots f_m(n)\not\equiv0\pmod{p},$$
			\end{itemize}
		then there exist infinitely many positive integers $t$ such that $$f_1(t), f_2(t), \dots, f_m(t)$$ are simultaneously prime numbers.}
	\end{theorem}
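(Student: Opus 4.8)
The plan begins with an honest recognition: the statement headed \emph{Schinzel's Hypothesis $H$} is a celebrated \emph{open conjecture}, not a theorem admitting an unconditional proof. The reference \cite{schinzel} is to Schinzel's original formulation, and throughout this section the hypothesis is \emph{invoked} rather than established; consequently the results resting on it --- in particular those requiring the simultaneous primality in conditions (iv), (v), (vi) --- are conditional. That said, let me describe the approach one would naturally take and pinpoint where it fails.

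The natural framework is the Bateman--Horn / Hardy--Littlewood heuristic refined by sieve theory. First I would attach to the tuple $(f_1,\dots,f_m)$ the conjectured count of $t\le N$ with all $f_i(t)$ prime, a main term of the shape
\[
\mathfrak{S}\cdot\frac{N}{(\log N)^m}\prod_{i=1}^{m}\frac{1}{\deg f_i},
\]
where the singular series $\mathfrak{S}=\prod_p\sigma_p$ assembles the local densities
\[
\sigma_p=\Bigl(1-\frac1p\Bigr)^{-m}\Bigl(1-\frac{\omega(p)}{p}\Bigr),
\]
with $\omega(p)$ the number of residues $t\bmod p$ satisfying $p\mid f_1(t)\cdots f_m(t)$. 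The second bulleted hypothesis --- that for every prime $p$ some $t$ keeps the product coprime to $p$ --- is precisely the condition $\omega(p)<p$, which makes every factor $\sigma_p$ positive; together with the irreducibility of the $f_i$ (controlling $\omega(p)$ and the convergence) it yields $\mathfrak{S}>0$. The heuristic then predicts infinitely many admissible $t$, which is exactly the desired conclusion.

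The hard part, and the reason no proof is known, is turning this heuristic into a rigorous lower bound. Sieve methods comfortably show that $f_1(t)\cdots f_m(t)$ has few prime factors for infinitely many $t$ (almost-prime results), but they collide with the \emph{parity problem} of Selberg: a pure sieve cannot separate integers with an even number of prime factors from those with an odd number, and so cannot force each $f_i(t)$ to be \emph{exactly} prime. This is the essential obstruction, and defeating it in the required generality --- several polynomials, a single variable, degrees that may exceed one --- lies beyond present technology.

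I would therefore not attempt an unconditional proof, but instead record the unconditional fragments that make the hypothesis credible and then use it as an assumption. Dirichlet's theorem on primes in arithmetic progressions is exactly the case $m=1$, $\deg f_1=1$; the work of Green, Tao and Ziegler settles systems of linear forms of finite complexity; and theorems of Iwaniec, Friedlander--Iwaniec and Heath--Brown establish prime-value statements for certain higher-degree forms, albeit in more than one variable. None of these covers the single-variable, multi-polynomial configuration needed here, so I would proceed exactly as in \cite{schinzel}: adopt Hypothesis $H$ as given, and let the arguments built on conditions (iv), (v), (vi) inherit its conditional status.
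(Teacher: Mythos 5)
You are right to refuse to prove this: the paper itself offers no proof of Hypothesis $H$, merely stating it with a citation to Schinzel--Sierpi\'{n}ski and then invoking it as an explicit assumption in the conditional propositions that follow. Your treatment --- recording it as an open conjecture and letting the downstream results inherit its conditional status --- matches the paper's approach exactly.
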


	\begin{prop}\label{prop1}
If Schinzel's Hypothesis H holds, then for all positive integers $\delta\equiv4\pmod{8}$ there exist infinitely many positive odd integers $n$ for which there are two divisors $d_1, d_2>1$ of $\frac{n^2+1}{2}$ such that $$d_1+d_2=\delta n+\delta-2.$$
	\end{prop}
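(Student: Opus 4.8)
The plan is to reduce the proposition to a single application of Schinzel's Hypothesis~H. Conditions (i)--(iii) have already been shown to confine $k$ to one residue class modulo $M=8\,p_1\cdots p_r\,B\,\delta$, so I would substitute $k=k_0+M\ell$ and view
$$a(\ell)=2k^2-2k+1,\qquad b(\ell)=\delta k-1,\qquad c(\ell)=\delta k-\delta+1$$
as three polynomials in $\ell$ with positive leading coefficients. Conditions (iv)--(vi) then demand exactly that $a(\ell),b(\ell),c(\ell)$ be simultaneously prime. Once such $k$ are produced, the factorization analysis shows the fundamental solution of (\ref{eq:uh}) must fall into case $4^-)$ or $7^+)$, and in both of these one reads off $U_0\equiv-1\pmod{b}$ and $U_0\equiv1\pmod{c}$, i.e. (\ref{eq:evoevo}); this is the single fact that drives the rest of the argument.

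I would first dispose of the easy premise of Theorem~\ref{eq:slutnja}: $b(\ell)$ and $c(\ell)$ are distinct linear polynomials, and $a(\ell)$ is irreducible over $\mathbb{Q}$ since $2k^2-2k+1$ has discriminant $-4$ and irreducibility is preserved under the linear substitution $k=k_0+M\ell$. The step I expect to be the main obstacle is the verification that $a(\ell)b(\ell)c(\ell)$ has no fixed prime divisor. For primes $p\nmid M$ this is routine, because $k$ ranges over all residues mod $p$ as $\ell$ varies and $a(\ell)b(\ell)c(\ell)$ has degree $4$, so for $p>4$ it cannot vanish identically; the prime $3$ is safe because $2a=(2k-1)^2+1$ is never divisible by $3$ (as $-1$ is a non-residue mod $3$), which reduces that case to keeping $b,c$ nonzero, while (\ref{eq:djelivosta1}) guarantees the auxiliary prime $p_1$ is at least $5$. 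The delicate primes are those dividing $M$, for there $k$ — and hence $a,b,c$ — is frozen to one residue, and I would lean on the careful choices already made: $x\equiv1\pmod B$ keeps $c$, and therefore $b=c+(\delta-2)$, a unit modulo each prime factor of $B$ and of $\delta$; the non-residue $x_1\not\equiv2-\delta\pmod{p_1}$ was chosen precisely so that neither $c$ nor $b$ vanishes mod $p_1$; and $2a=(2k-1)^2+1$ together with the residue conditions keeps $a$ a unit modulo the primes dividing $A$. Assembling these shows no $p\mid M$ is a fixed divisor, so Hypothesis~H yields infinitely many $\ell$, hence infinitely many $k$ satisfying all of (i)--(vi).

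It remains to close the loop for each admissible $k$. Cases $4^-)$ and $7^+)$ give (\ref{eq:evoevo}), so setting $X_0=2g(2k-1)U_0$ the congruence (\ref{eq:lalala}) holds modulo $2(\delta k-1)(\delta k-\delta+1)=d\delta^2-2g$; consequently $n=(X_0-d\delta\varepsilon)/(d\delta^2-2g)$ is an integer, positive because $U_0$ is large. Reconstructing $d_1,d_2$ from $d_2-d_1=2g(2k-1)V_0$ and $d_1+d_2=\delta n+\delta-2$ (both of the same parity, hence integral), I would note that (\ref{eq:evoevo}) also forces $(\delta k-1)(\delta k-\delta+1)\mid \delta(2k-1)U_0-\varepsilon$, so $g\mid(d_1+d_2)$; combined with $g\mid(d_2-d_1)$ and $g$ odd this gives $g\mid d_1,d_2$, whence $d_1,d_2\mid(n^2+1)/2$. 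A parity check in the spirit of the proof of Theorem~1, using $U_0$ odd and $\delta(\delta-2)\equiv0\pmod 8$, shows $n$ is odd.

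Finally, $d_1=1$ would force $d_2=(n^2+1)/2$ and a quadratic relation incompatible with $d_1+d_2=\delta n+\delta-2$ for large $n$, so $d_1,d_2>1$ for all but finitely many $k$; and since distinct admissible $k$ yield distinct, unboundedly large $n$, this exhibits infinitely many odd $n$ with the desired divisor pair, conditional on Schinzel's Hypothesis~H.
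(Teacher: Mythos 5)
Your proposal follows essentially the same route as the paper: reduce conditions (iv)--(vi) to Schinzel's Hypothesis H applied to the three polynomials in the residue-class parameter, check irreducibility, and rule out fixed prime divisors by root-counting for primes not dividing the modulus and by the congruence choices behind (ii)--(iii) for primes dividing it --- exactly the paper's case analysis $p=2$, $p=3$, $p\geq5$ with $p\mid s$ or $p\nmid s$, followed by the same endgame via factorizations $4^-)$ and $7^+)$ and (\ref{eq:evoevo}). The one detail to repair is that you fold $\delta$ into the modulus $M$, which freezes $k$ modulo the prime factors of $\delta$ and would additionally require verifying $a=2k^2-2k+1\not\equiv0$ modulo any prime $p\mid\delta$ with $p\equiv1\pmod{4}$ (where $-1$ is a residue, so bad classes for $k$ exist); the paper sidesteps this by taking the modulus $s=8p_1\cdots p_rB$, since conditions (i)--(iii) do not actually constrain $k$ modulo $\delta$, so those primes fall under the root-counting case.
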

	\begin{proof}
	\indent We have already shown that positive integers $k$ we have earlier defined simultaneously satisfy conditions (i), (ii) and (iii). We show there exist infinitely many positive integers  $k$ for which 
	\begin{equation}\label{eq:deff}
	a=2k^2-2k+1,\enspace b=\delta k-1 \enspace \textrm{and} \enspace c=\delta k-\delta+1
	\end{equation}
	are simultaneously prime. We assume 
	\begin{equation}\label{eq:defk}
	k\equiv y_0\pmod{(8p_1p_2\dots p_rB)} \enspace \textrm{or} \enspace k=8p_1p_2\dots p_rBe+y_0, \enspace e\in\mathbb{N}.\end{equation}
	Let $s=8p_1p_2\dots p_rB$ or $k= se+y_0$. We deal with polynomials of the form 
	\begin{equation}\label{eq:deff}
	f_1(k)=2k^2-2k+1,\enspace f_2(k)=\delta k-1, \enspace f_3(k)=\delta k-\delta+1
	\end{equation} that in our case become polynomials 
	$$f_1(e)=2s^2e^2+2s(2y_0-1)e+2y_0^2-2y_0+1,$$
	$$f_2(e)=\delta se+\delta y_0-1,$$
	$$f_3(e)=\delta se+\delta y_0-\delta+1.$$
	We prove $f_1(e), f_2(e), f_3(e)$ satisfy conditions of Schinzel's Hypothesis H. Polynomials $f_1, f_2, f_3$ are irreducible so they satisfy the first condition of Schinzel's hypothesis H. \\
	\indent Now we prove that for every prime number $p$ there exists a positive integer $n$ for which $$f_1(n)f_2(n)f_3(n)\not\equiv0\pmod{p}.$$
	We deal with three cases $p=2,$ $p=3$ and $p\geq5$, $p$ prime.\\
	\indent Because $\delta$ is even, for $p=2$ we have $f_1(e)\equiv f_2(e)\equiv f_3(e)\equiv1\pmod{2},$ so we conclude that for every positive integer $e$ we have $$f_1(e)f_2(e)f_3(e)\not\equiv0\pmod{2}.$$
	For $p=2$ the second condition of Schinzel's Hypothesis H is satisfied.
	
 Let $p=3$. We show $f_1(e)\not\equiv0\pmod{3}$ for every positive integer $e$. Indeed, if the congruence $f_1(e)\equiv0\pmod{3}$ is satisfied, then $$2f_1(e)\equiv(2se+(2y_0-1))^2+1\equiv0\pmod{3}$$
	would imply $\left(\frac{-1}{3}\right)=1$ which is a contradiction.\\
	\indent We distinguish two cases: $3\mid s$ or $3\nmid s$. For $3\mid s$ the congruence (\ref{eq:djelivosta1}) implies that $3\nmid A$ or $3\mid B$ which implies $\delta\equiv2\pmod{3}$. From $x\equiv1\pmod{B}$ we have $x\equiv1\pmod{3}.$ On the other side, because \begin{equation}\label{eq:sch1}
	x=\delta k-\delta+1,
	\end{equation} we have $$x\equiv\delta y_0-\delta+1\equiv 2y_0-1\equiv1\pmod{3}.$$ Consequently, $3\nmid(\delta y_0-1)$ and $3\nmid(\delta y_0-\delta+1),$ so congruences $f_2(e)\equiv0\pmod{3}$ and $f_3(e)\equiv0\pmod{3}$ are unsolvable. \\
	\indent If $3\nmid s$, then congruences $f_2(e)\equiv0\pmod{3}$ and $f_3(e)\equiv0\pmod{3}$ have at most one solution modulo $3$. But, this means that there exists at least one residue class modulo $3$ that does not satisfy any of two mentioned congruences so there are infinitey many positive integers $e$ that satisfy the congruence $$f_1(e)f_2(e)f_3(e)\not\equiv0\pmod{3}.$$
	The second condition of Schinzel's Hypothesis H is satisfied for $p=3$.\\ 
	\indent Now, let $p\geq5$ a prime. Again, we distinguish two cases: $p\mid s$ or $p\nmid s$.
	If $p\mid s$, then $p\mid A$ or $p\mid B$. We have $$f_1(e)\equiv2y_0^2-2y_0+1\pmod{p},$$
	$$f_2(e)\equiv\delta y_0-1\pmod{p},$$
	$$f_3(e)\equiv \delta y_0-\delta+1\pmod{p}.$$
	\indent If $p\mid B$, then $\delta\equiv2\pmod{p}$, so from $x\equiv 1\pmod{p}$ and (\ref{eq:sch1}) we easily get $$x\equiv\delta y_0-\delta+1\equiv 2y_0-1\equiv1\pmod{p}.$$ We conclude $y_0\equiv1\pmod{p}.$ So, we have $$2y_0^2-2y_0+1\equiv1\pmod{p}, \enspace \delta y_0-1\equiv1\pmod{p}, \enspace \delta y_0-\delta+1\equiv1\pmod{p}$$ and congruences $f_1(e)\equiv0\pmod{p}$, $f_2(e)\equiv0\pmod{p}$, $f_3(e)\equiv0\pmod{p}$ do not have solutions.\\
	\indent If $p\mid A$, we distinguish two cases: $p=p_1$ or $p=p_i$ for $i\in\{2, \dots, r\}$. Let $p=p_i$ for $i\in\{2, \dots, r\}$. From (\ref{eq:sch1}) we get  $$x\equiv \delta y_0-\delta +1\equiv1\pmod{p_i},$$ so we have $\delta(y_0-1)\equiv0\pmod{p_i}.$ From $2A=\delta(\delta-2)+2=(\delta-1)^2+1\equiv0\pmod{p_i}$ we get $p_i\nmid \delta$ and $p_i\nmid (\delta-1)$ so we have $y_0\equiv1\pmod{p_i}.$ Because of $$2y_0^2-2y_0+1\equiv1\not\equiv0\pmod{p_i}, \enspace \delta y_0-\delta+1\equiv1\not\equiv0\pmod{p_i},$$ $$\delta y_0-1\equiv\delta-1\not\equiv0\pmod{p_i},$$ congruences $f_1(e)\equiv0\pmod{p_i},$ $f_2(e)\equiv0\pmod{p_i},$ $f_3(e)\equiv0\pmod{p_i}$ do not have solutions.\\
	\indent Finally, let $p=p_1$. From (\ref{eq:sch1}) we have
	\begin{equation}\label{eq:poziv1}
	x\equiv\delta y_0-\delta+1\equiv x_1\pmod{p_1},
	\end{equation}
	where $x_1$ is quadratic nonresidue modulo $p_1$ and $x_1\not\equiv2-\delta\pmod{p_1}$ is satisfied. So, we have that congruences $f_2(e)\equiv0\pmod{p_1}$ and $f_3(e)\equiv0\pmod{p_1}$ do not have solutions. It remains to deal with the congruence $f_1(e)\equiv0\pmod{p_1},$ or more precisely with $$2y_0^2-2y_0+1\equiv0\pmod{p_1}.$$
	From (\ref{eq:sch1}) and (\ref{eq:poziv1}) we have $$\delta^2(2y_0^2-2y_0+1)\equiv 2x_1^2+2x_1\delta-4x_1+\delta^2-2\delta+2\equiv2x_1(x_1+\delta-2)\not\equiv0\pmod{p_1}$$ so the congruence $f_1(e)\equiv0\pmod{p_1}$  does not have solutions.\\
	\indent If $p\nmid s$, the congruence $f_1(e)\equiv0\pmod{p}$ has at most two solutions modulo $p$, while congruences $f_2(e)\equiv0\pmod{p}$ and $f_3(e)\equiv0\pmod{p}$ have at most one solution modulo $p$. Hence, there exists at least one residue class modulo $p$ which does not satisfy any of three mentioned congruences. There are infinitely many positive integers $e$ that satisfy $$f_1(e)f_2(e)f_3(e)\not\equiv0\pmod{p}.$$
	\end{proof}

	\begin{exmp}
	\normalfont	For $\delta=12$ we get $A=61$, $B=5$. %Quadratic nonresidues modulo $61$ are $$x\equiv2, 6, 7, 8, 10, 11, 17, 18, 21, 23, 24, 26, 28, 29, 30, 31, 32, 33, $$
		%$$35, 37, 38, 40, 43, 44, 50, 51, 53, 54, 55, 59\pmod{61}.$$
		%We exclude quadratic nonresidue for which we have $x_1\equiv2-\delta\pmod{p_1}$. More precisely,
		We exclude $x_1\equiv51\pmod{61}.$ %Let $x_1=24$. 
		The chosen system of congruences is $$x\equiv24\pmod{61}, \enspace x\equiv1\pmod{5}, \enspace x\equiv25\pmod{96}.$$
		Solutions of the above system of congruences are $$x\equiv16921\pmod{29280}.$$
		We write $x=29280s+16921,\enspace s\in\mathbb{Z}$.
		We get $k\equiv1411\pmod{2440}$ or $$k=2440u+1411, \enspace u\in\mathbb{Z}.$$ Introducing $k$ into (iv), (v) and (vi) three polynomials are defined:
		$$a=f_1(u)=11907200u^2 + 13766480 u + 3979021,$$ 
		$$b=f_2(u)=29280 u+16931,$$
		$$c=f_3(u)=29280 u+16921.$$
		The first condition of the Schinzel's Hypothesis is satisfied.  We give the explicit check of the second condition of Schinzel' Hypothesis H and show it is enough to set $n=1, 2, 3$ in order to check that the second condition of the Schinzel's Hypothesis H is satisfied.\\
		For $n=1$ we get
		$$f_1(1)\cdot f_2(1)\cdot f_3(1)=(13\cdot2280977)\cdot(11\cdot4201)\cdot(47\cdot983).$$
		For $n=2$ it is obtained
		$$f_1(2)\cdot f_2(2)\cdot f_3(2)=79140781\cdot(13\cdot5807)\cdot(7\cdot41\cdot263),$$
		while for $n=3$ we get
		$$f_1(3)\cdot f_2(3)\cdot f_3(3)=(641\cdot237821)\cdot(17\cdot6163)\cdot104761.$$
		%If there exists a prime $p$ which divides three products $f_1(n)f_2(n)f_3(n), \enspace n=1, 2, 3$, then such $p$ divides also it's common divisor. 
		We notice $$\textrm{gcd}(f_1(1)\cdot f_2(1)\cdot f_3(1),\enspace f_1(2)\cdot f_2(2)\cdot f_3(2), f_1(3)\cdot f_2(3)\cdot f_3(3))=1,$$
		so we have shown that prime $p$ that divides three products $f_1(n)f_2(n)f_3(n), \enspace n=1, 2, 3$ does not exist. \\
%		\indent Polynomials $f_1, f_2, f_3$ are also irreducible so we conclude that $f_1, f_2, f_3$ satisfy both conditions of Schinzel's Hypothesis H and in case Schinzel's Hypothesis H is correct, there exist infinitely many positive integers $u$ such that $$f_1(u), f_2(u), f_3(u)$$ are simultaneously prime numbers.
		
		For $k\leq10^9$ there are $153$ positive integers $k$ that satisfy mentioned conditions. First of such positive integers $k$ are:
		$$1411, 16051, 240531, 360091, 425971, 626051, 1314131, 1975371, 2241331, 2426771, 2495091 \dots$$
		For $k=1411$, the appropriate Pell equation and it's solutions are:\\
%		\indent We get $$a=2\cdot1411^2-2\cdot1411+1=3979021, \enspace b=12\cdot1411-1=16931, \enspace c=12\cdot1411-11=16921.$$
%		The Pell equation is
		$$U^2-2279895083614942V^2=1,$$
%		Fundamentalno rješenje ove jednadžbe vrlo je veliko što će u konačnici rezultirati i vrlo velikim brojem $n$. Fundamentalno rješenje se može naći u programskom paketu PARI/GP \cite{pari}, pomoću funkcije \texttt{quadunit}.
		
%		We get
		$$U_0\approx2.58023\cdot10^{1502988},\enspace V_0\approx1.54982\cdot10^{1502980}.$$
%		With fundamental solution $(U_0, V_0)$ of the Pell equation (\ref{eq:uh}) we determine solution $(X_0, Y_0)$ of the initial pellian equation (\ref{eq:28}). 
We have $X_0=2g(2k-1)U_0$ where $g=\frac{\delta^2-2\delta+2}{2}$. Finally, from
		$$n=\frac{X_0-d\delta(\delta-2)}{d\delta^2-2g},$$
%		which is in our case
%		$$n=\frac{X_0-120d}{144d-2\cdot61}, \enspace d=2k^2-2k+1=3979021,$$
%		we get $n$, or precisely we get
	we get	$$n\approx1.54982\cdot10^{1502985},$$
		while divisors of $(n^2+1)/2$ are
		$$d_1\approx9.89977\cdot10^{1502978},\enspace d_2\approx1.85979\cdot10^{1502986}.$$
	\end{exmp}
\end{subsection}

\begin{subsection}{$\delta\equiv6\pmod{8}$}
\indent Let $\delta\equiv6\pmod{8}$ and $k\equiv2\pmod{8}$. For integers $a, b, c$ we get
$$a=2k^2-2k+1\equiv5\pmod{8},$$
\vspace{-20pt}
$$b=\delta k-1\equiv3\pmod{8},$$
\vspace{-15pt}
\begin{equation}\label{eq:kongr2}
c=\delta k-\delta+1\equiv7\pmod{8}.
\end{equation}
%Our goal is to prove that we are able to find infinitely many positive integers $k$ such that only possible factorization is $8^-)$, which will imply that $(X, Y)$ are integer solutions. Again, we deal with every factorization separately and we get the following conditions: 
Like in the previous subsection, we deal with the following cases:\\
%Because $(U_0+1)-(U_0-1)=2$, we have:\\
$1^+) \enspace U_0+1=2abcs^2, \enspace U_0-1=2^2t^2$.\\
%We easily get the equation $(2t)^2=2abcs^2-2$. %The following condition has to be fullfilled $$\left(\frac{-2}{a}\right)=\left(\frac{-2}{b}\right)=\left(\frac{-2}{c}\right)=1$$ if we want that the equation $(2t)^2=2abcs^2-2$ has solutions.
Because $a\equiv5\pmod{8}$, we have
$$\left(\frac{-2}{a}\right)=\left(\frac{-1}{a}\right)\left(\frac{2}{a}\right)=-1$$ which makes this case impossible.\\\\
\noindent$1^-) \enspace U_0+1=2^2t^2, \enspace U_0-1=2abcs^2$.\\
%We get $(2t)^2=2abcs^2+2$ and %if we want that this equation has solutions, the condition
%$$\left(\frac{2}{a}\right)=\left(\frac{2}{b}\right)=\left(\frac{2}{c}\right)=1$$
%has to be satisfied which is in contradiction with $\left(\frac{2}{a}\right)=\left(\frac{2}{b}\right)=-1$ 
For $a\equiv5\pmod{8}$ and $b\equiv3\pmod{8}$ this case is not possible.\\\\
$2^+) \enspace U_0+1=2^2abcs^2, \enspace U_0-1=2t^2$.\\
%We get $2abcs^2-t^2=1$, which implies $2s^2-t^2\equiv1\pmod{8}$. This case is possible for $s\equiv t\equiv1\pmod{2}$ so we deal with it more precisely.\\
%We obtain $2abcs^2-1=t^2$. If we want that the equation in this case does not have any solutions in $s$ and $t$, one of the following conditions has to be fulfilled $$\left(\frac{-1}{a}\right)=-1 \enspace \textrm{or} \enspace \left(\frac{-1}{b}\right)=-1 \enspace \textrm{or} \enspace \left(\frac{-1}{c}\right)=-1.$$
Because $b\equiv3\pmod{8},$ we get $\left(\frac{-1}{b}\right)=-1$ which makes this case impossible.\\\\
$2^-) \enspace U_0+1=2t^2, \enspace U_0-1=2^2abcs^2$.\\
We get $t^2-2abcs^2=1,$ which is a contradiction with minimality of $(U_0, V_0)$.\\\\
$3^+) \enspace U_0+1=2abs^2, \enspace U_0-1=2^2ct^2$.\\
%The equation $abs^2-2ct^2=1$ implies 
%$7s^2-6t^2\equiv1\pmod{8}$ which is possible for $s\equiv t\equiv1\pmod{2}$, so this case is taken into further consideration.\\
%\indent We get $(2ct)^2=2abcs^2-2c$. 
If we want this case to be impossible, then it should be satisfied one of the following conditions $$\left(\frac{-2c}{a}\right)=\left(\frac{-1}{a}\right)\left(\frac{2}{a}\right)\left(\frac{c}{a}\right)=-\left(\frac{c}{a}\right)=-1 \enspace \enspace \Rightarrow \enspace \left(\frac{c}{a}\right)=1,$$ or $$\left(\frac{-2c}{b}\right)=\left(\frac{-1}{b}\right)\left(\frac{2}{b}\right)\left(\frac{c}{b}\right)=\left(\frac{c}{b}\right)=-1 \enspace \enspace \Rightarrow \enspace  \left(\frac{c}{b}\right)=-1.$$
$3^-) \enspace U_0+1=2^2ct^2, \enspace U_0-1=2abs^2$.\\
%We have $2ct^2-abs^2=1$, which implies $6t^2-7s^2\equiv1\pmod{8}$, which is fullfilled for $t\equiv0\pmod{2}$ and $s\equiv1\pmod{2},$ so we take this case into further consideration.\\ 
If we want this case to be impossible, then one of the following conditions has to be fulfilled %$$\left(\frac{2c}{a}\right)=-1 \enspace\textnormal{or} \enspace\left(\frac{2c}{b}\right)=-1.$$ More precisely, there has to be fulfilled
$$\left(\frac{2c}{a}\right)=\left(\frac{2}{a}\right)\left(\frac{c}{a}\right)=-\left(\frac{c}{a}\right)=-1 \enspace \Rightarrow \enspace \left(\frac{c}{a}\right)=1,$$
or
$$\left(\frac{2c}{b}\right)=\left(\frac{2}{b}\right)\left(\frac{c}{b}\right)=-\left(\frac{c}{b}\right)=-1 \enspace \Rightarrow \enspace \left(\frac{c}{b}\right)=1.$$

\noindent Cases $4^+), 4^-), 5^+), 5^-), 6^+), 6^-), 7^+), 7^-)$ are impossible in for integers $a, b, c$ that satisfy congruences (\ref{eq:kongr2}).\\

%$4^+) \enspace U_0+1=2acs^2, \enspace U_0-1=2^2bt^2$.\\
%We get $acs^2-2bt^2=1$ which implies $3s^2-6t^2\equiv1\pmod{8}$ and this is not possible for any $s, t\in\mathbb{Z}.$\\\\
%$4^-) \enspace U_0+1=2^2bt^2, \enspace U_0-1=2acs^2$.\\
%We get $2bt^2-acs^2=1$ which implies $6t^2-3s^2\equiv1\pmod{8}$ and this is not satisfied for any $s, t\in\mathbb{Z}$.\\\\
%$5^+) \enspace U_0+1=2bcs^2, \enspace U_0-1=2^2at^2$.\\
%We have $2bcs^2-2^2at^2=2$ which implies $5s^2-2t^2\equiv1\pmod{8}$ and this is not satisfied for any $s, t\in\mathbb{Z}$.\\\\
%$5^-) \enspace U_0+1=2^2at^2, \enspace U_0-1=2bcs^2$.\\
%We get $2at^2-bcs^2=1$ which implies $2t^2-5s^2\equiv1\pmod{8}$ and that is not satisfied for any $s, t\in\mathbb{Z}$.\\\\
%$6^+) \enspace U_0+1=2as^2, \enspace U_0-1=2^2bct^2$.\\
%We obtain $as^2-2bct^2=1$ which implies $5s^2-2t^2\equiv1\pmod{8}$ which is not possible for any $s, t\in\mathbb{Z}$.\\\\
%$6^-) \enspace U_0+1=2^2bct^2, \enspace U_0-1=2as^2$.\\
%It is obtained $2bct^2-as^2=1$ which implies $2t^2-5s^2\equiv1\pmod{8}$ and this is not possible for any $s, t\in\mathbb{Z}$.\\\\
%$7^+) \enspace U_0+1=2bs^2, \enspace U_0-1=2^2act^2$.\\
%We get $bs^2-2act^2=1$ which implies $3s^2-6t^2\equiv1\pmod{8}$ and which is not possible for any $s, t\in\mathbb{Z}$.\\\\
%$7^-) \enspace U_0+1=2^2act^2, \enspace U_0-1=2bs^2$.\\
%We have $2act^2-bs^2=1$ which implies $6t^2-3s^2\equiv1\pmod{8}$ and this is not satisfied for any $s, t\in\mathbb{Z}$.\\\\

\noindent $8^+) \enspace U_0+1=2cs^2, \enspace U_0-1=2^2abt^2$.\\
%We obtain $cs^2-2abt^2=1$ which implies $7s^2-6t^2\equiv1\pmod{8}$ which is fulfilled for $s\equiv t\equiv1\pmod{2},$ so we take this case into further consideration.\\
%\indent We get $(cs)^2=c+2abct^2$. 
If we want that the equation does not have any solutions, then one of the following conditions  
$$\left(\frac{c}{a}\right)=-1$$
or
$$\left(\frac{c}{b}\right)=-1$$ has to be satisfied.\\\\
$8^-) \enspace U_0+1=2^2abt^2, \enspace U_0-1=2cs^2$.\\
%We have $2abt^2-cs^2=1$ which implies $6t^2-7s^2\equiv1\pmod{8}$ and this is fulfilled for $t\equiv0\pmod{2}$ and $s\equiv1\pmod{2},$ so we take this case into further consideration.\\
%\indent From the equation $(cs)^2=2abct^2-c$ we get that 
One of the following conditions 
$$\left(\frac{-c}{a}\right)=\left(\frac{-1}{a}\right)\left(\frac{c}{a}\right)=\left(\frac{c}{a}\right)=-1 \enspace \Rightarrow \enspace \left(\frac{c}{a}\right)=-1$$
or
$$\left(\frac{-c}{b}\right)=\left(\frac{-1}{b}\right)\left(\frac{c}{b}\right)=-\left(\frac{c}{b}\right)=-1 \enspace \Rightarrow \enspace \left(\frac{c}{b}\right)=1$$ 
has to be fulfilled if we want that this case is not possible.\\\\
\indent Possible factorizations are $3^+), 3^-), 8^+), 8^-)$. If we set conditions $$\left(\frac{c}{a}\right)=\left(\frac{a}{c}\right)=1, \enspace \left(\frac{c}{b}\right)=\left(\frac{b}{c}\right)=-1,$$
we get that the only possible factorization is $8^-)$.

\indent For $\delta\equiv6\pmod{8}$ and $k\equiv2\pmod{8}$ we have
$$\left(\frac{c}{a}\right)=
%\left(\frac{\delta k-\delta +1}{2k^2-2k+1}\right)=\left(\frac{2k^2-2k+1}{\delta k-\delta +1}\right)=\left(\frac{\delta^2-2\delta k}{\delta k-\delta+1}\right)=\left(\frac{\delta^2-2\delta+2}{\delta k-\delta+1}\right)=$$
%$$=\left(\frac{2}{\delta k-\delta+1}\right)\left(\frac{\delta^2/2-\delta+1}{\delta k-\delta+1}\right)=\left(\frac{\delta^2/2-\delta+1}{\delta k-\delta+1}\right)=\left(\frac{\delta k-\delta+1}{\delta^2/2-\delta+1}\right)=\left(\frac{c}{A}\right)=
\left(\frac{-c}{A}\right),$$
for $A=\delta^2/2-\delta+1$. We easily determine $A\equiv1\pmod{4}$, where $A$ can be composite.\\
We also obtain
%$$\left(\frac{c}{b}\right)=%-\left(\frac{b}{c}\right)=-\left(\frac{\delta k-1}{\delta k-\delta+1}\right)=-\left(\frac{\delta-2}{\delta k-\delta+1}\right)=$$
%$$=-\left(\frac{4}{\delta k-\delta+1}\right)\left(\frac{(\delta-2)/4}{\delta k-\delta+1}\right)=-\left(\frac{(\delta-2)/4}{\delta k-\delta+1}\right)=$$
$$\left(\frac{c}{b}\right)=$$\[=\left\{ 
\begin{array}{l l}
\left(\frac{\delta k-\delta+1}{B}\right)=\left(\frac{c}{B}\right)=-\left(\frac{-c}{B}\right), & \quad \text{for $\delta\equiv14\pmod{16}, \enspace B\equiv3\pmod{4}$}\\\\
-\left(\frac{\delta k-\delta+1}{B}\right)=-\left(\frac{c}{B}\right)=-\left(\frac{-c}{B}\right), & \quad \text{for $\delta\equiv6\pmod{16}, \enspace B\equiv1\pmod{4}$}
\end{array} \right.\]
for $B=(\delta-2)/4$, where $B$ can be composite, too.\\

\indent Let $k$ be such that the following conditions are fulfilled: \\\\
\noindent(i)\enspace\enspace $k\equiv2\pmod{8}$,\\\\
(ii)\enspace\enspace $\left(\frac{-c}{A}\right)=1$ for $A=\frac{\delta^2}{2}-\delta+1$, \\\\
(iii)\enspace\enspace $\left(\frac{-c}{B}\right)=1$ for $B=(\delta-2)/4$,\\\\
(iv)\enspace\enspace $a=2k^2-2k+1$ is prime,\\\\
(v)\enspace\enspace $b=\delta k-1$ is prime,\\\\
(vi)\enspace\enspace $c=\delta k-\delta+1$ is prime.\\\\
%From (i) and $\delta\equiv6\pmod{8}$ we have
%$$a\equiv5\pmod{8}, \enspace b\equiv3 \pmod{8}, \enspace c\equiv7\pmod{8}.$$
We have already shown that the condition (ii) is equivalent with the condition $(c/a)=1$ and the condition (iii) is equivalent with $(c/b)=-1.$ Now we check if the conditions (i), (ii) and (iii) can be simultaneously satisfied. 

\indent We easily obtain $\textrm{gcd}(A, B)=\textrm{gcd}(A, \delta)=1$. Because $4B=\delta-2$ and $B$ odd, we have $\textrm{gcd}(B, \delta)=1$ which implies $\textrm{gcd}(AB, \delta)=1$. 
 
Using Chinese remainder theorem we conclude that there exist infinitely many integers $x$ that satisfy the following system of congruences
$$x\equiv x_i\pmod{p_i},  \enspace x\equiv1\pmod{B}, \enspace i=1, 2, \dots, r,$$ where $x_i$ is a quadratic residue modulo $p_i$, where $p_i, \  i=1, 2,\dots, r$ are all different prime factors of $A$. %Let $\delta=8l'+6, \enspace l'\in\mathbb{N}$. %In this case we have 
We get
\begin{equation}\label{eq:djelivostb}
A=\frac{\delta^2}{2}-\delta+1\not\equiv0\pmod{3}.
\end{equation} %so we conclude that $A$ is not divisible by $3$. 
Like in the previous section, for every prime factor $p_i$ of $A$ we have $p_i\geq5$ and there are $(p_i-1)/2\geq2$ quadratic residues modulo $p_i$, so we choose $x_i$ such that $$x_i\not\equiv\delta-2\pmod{p_i}.$$

\indent We define $x=-(\delta k-\delta+1)=\delta-\delta k-1$ or $$k=\frac{\delta-x-1}{\delta},$$
where $x$ is a solution of the above system of the congruences. Let $k\equiv2\pmod{8}$. %We assume $k=8l''+2$, for $l''\in\mathbb{N}_0$. 
In this case $x$ satisfies
$$x%=\delta-\delta k-1=\delta-\delta(8l''+2)-1
\equiv-\delta-1\pmod{8\delta}.$$
Because $\textrm{gcd}(AB, 8\delta)=1$, we get that the system of the congruences
$$x\equiv x_i\pmod{p_i}, \enspace x\equiv1\pmod{B}, \enspace x\equiv -\delta-1\pmod{8\delta}, \enspace i=1, 2,\dots, r,$$
has solutions. If $x_0$ is one of its solutions, all solutions $x$ are of the form
$$x\equiv x_0\pmod{8AB\delta}.$$
Obviously, all solutions $x$ satisfy  $$\left(\frac{x}{A}\right)=\left(\frac{x_i}{A}\right)=1 \enspace \enspace \textrm{and} \enspace \enspace \left(\frac{x}{B}\right)=\left(\frac{1}{B}\right)=1,\enspace i=1, 2, \dots, r,$$
as well as solutions of the form $x=\delta k-\delta-1,$ where $k\equiv2\pmod{8}$. Hence, conditions (i), (ii) and (iii) are simultaneously satisfied.
% We have to answer whether integers of the form $2k^2-2k+1, \enspace \delta k-1, \enspace \delta k-\delta+1$, under all mentioned conditions, are simultaneously prime numbers. Using Schinzel's hypothesis H (\ref{eq:slutnja}) we get a conditional and affirmative answer  .
\begin{prop}\label{eq:predzadnja}
	\textit{If Schinzel's Hypothesis H is satisfied, then for all positive integers $\delta\equiv6\pmod{8}$ there are infinitely many positive odd integers $n$ such that there exist divisors $d_1, d_2$ of $\frac{n^2+1}{2}$ for which $d_1+d_2=\delta n+\delta-2$.}
\end{prop}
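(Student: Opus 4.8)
The plan is to mirror, line for line, the architecture of the proof of Proposition~\ref{prop1}, feeding in the arithmetic of the $\delta\equiv6\pmod 8$ case developed above in place of the $\delta\equiv4\pmod 8$ data. We have already singled out the residue conditions (i)--(iii) on $k$ that collapse the analysis of the fundamental solution of the Pell equation (\ref{eq:uh}) to the single admissible factorization $8^{-})$, and we have shown, via the Chinese Remainder Theorem, that (i)--(iii) are mutually compatible and are realized along a full arithmetic progression of $x$, and hence of $k$. What is left is to force, infinitely often along this progression, the simultaneous primality of $a=2k^2-2k+1$, $b=\delta k-1$, $c=\delta k-\delta+1$, i.e. conditions (iv)--(vi); for this I would invoke Schinzel's Hypothesis~H (Theorem~\ref{eq:slutnja}).

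Writing $s=8p_1\cdots p_rB$ and $k=se+y_0$ with $y_0$ the residue produced by the construction above, I would substitute into $a,b,c$ to obtain polynomials $f_1(e)=2k^2-2k+1$, $f_2(e)=\delta k-1$, $f_3(e)=\delta k-\delta+1$ in the variable $e$. Their irreducibility (the first hypothesis of Theorem~\ref{eq:slutnja}) is immediate, since $f_2,f_3$ are linear and $f_1$ has no rational root because $2f_1=(2k-1)^2+1$. The substance of the argument is the local hypothesis: for every prime $p$ one must produce a residue class of $e$ avoiding all three congruences $f_i\equiv0\pmod p$. I would organize this exactly as before, into $p=2$, $p=3$, and $p\ge5$, and within the last two into the subcases $p\mid s$ and $p\nmid s$.

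The easy primes transfer verbatim: for $p=2$ every $f_i$ is odd because $\delta$ is even; for $p=3$ the identity $2f_1=(2k-1)^2+1$ and $\left(\tfrac{-1}{3}\right)=-1$ give $3\nmid f_1$, while $f_2,f_3$ each forbid at most one class modulo $3$, using (\ref{eq:djelivostb}) to separate $3\mid s$ from $3\nmid s$. When $p\nmid s$ the congruence $f_1\equiv0$ has at most two roots and $f_2\equiv0,f_3\equiv0$ at most one each, so some class among the $p\ge5$ residues survives. The delicate case is $p\mid s$, that is $p\mid A$ or $p\mid B$, where $f_1,f_2,f_3$ reduce to constants in $e$ and must be shown nonzero. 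Here I would exploit the sign-corrected substitution $x=-(\delta k-\delta+1)$ together with the identity $\delta^2(2k^2-2k+1)=2x^2-2x(\delta-2)+2A$ and $2A=(\delta-1)^2+1$, which yield $\delta^2f_1\equiv 2x(x-\delta+2)\pmod p$. For $p=p_i\mid A$ the construction forces $x\equiv x_i$ with $x_i$ a nonzero quadratic residue chosen so that $x_i\not\equiv\delta-2\pmod{p_i}$, whence $\delta^2f_1\equiv2x_i(x_i-\delta+2)\not\equiv0$, and $f_3\equiv -x_i$, $f_2\equiv-(x_i-\delta+2)$ are nonzero for the same reason; for $p\mid B$ one has $\delta\equiv2\pmod p$ and $x\equiv1\pmod p$, making $f_1,f_2,f_3$ units modulo $p$.

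This $p\mid s$ verification is the step I expect to be the main obstacle, precisely because the residue choices must be fixed in advance, in the Chinese Remainder step, so as to secure the quadratic-character conditions (ii)--(iii) \emph{and} the non-vanishing of $f_1,f_2,f_3$ modulo every prime dividing $s$. The only genuine difference from Proposition~\ref{prop1} is that here one wants $(x/A)=+1$, so \emph{all} the $x_i$ are taken to be quadratic residues rather than a single nonresidue factor, which is exactly what makes condition (ii) come out to $+1$. Once Theorem~\ref{eq:slutnja} furnishes infinitely many $e$ for which $a,b,c$ are simultaneously prime, each resulting $k$ pins the fundamental solution of (\ref{eq:uh}) into factorization $8^{-})$, hence fixes $U_0$ modulo $\delta k-1$ and $\delta k-\delta+1$ as in (\ref{eq:evoevo}); this makes the congruence (\ref{eq:lalala}) hold and produces an integer, necessarily odd, $n$ with $d_1+d_2=\delta n+\delta-2$. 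Since distinct $k$ give distinct (and growing) $n$, there are infinitely many such $n$, which completes the proof.
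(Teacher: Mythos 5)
Your proposal is correct and follows essentially the same route as the paper: reduce to verifying the two hypotheses of Schinzel's Hypothesis H for $f_1,f_2,f_3$, split the local condition into $p=2$, $p=3$, $p\ge5$ with the subcases $p\mid s$ and $p\nmid s$, and in the delicate $p\mid A$ case use the sign-flipped substitution $x=-(\delta k-\delta+1)$ to get $\delta^2 f_1\equiv 2x_i(x_i-\delta+2)\not\equiv0\pmod{p_i}$, exactly as in the paper. The only difference is presentational (you reduce $f_2,f_3$ directly in terms of $x_i$ rather than first solving for $y_0$), which changes nothing.
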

\begin{proof}
%\indent Like in the proof of the Proposition \ref{prop1}, we have to show that there are infinitely many positive integers $k$ which simultaneously satisfy conditions (i)-(vi). Positive integers $k$ that we have defined earlier simultaneously satisfy conditions (i), (ii) and (iii) so it only remains the proof that such positive integers $k$ that simultaneously satisfy conditions (iv), (v) and (vi). 
We assume (\ref{eq:defk}). %Like in the proof of the Proposition \ref{prop1}, we show that the polynomials $f_1(e), f_2(e), f_3(e)$ satisfy conditions of Schinzel's Hypothesis H (\ref{eq:slutnja}). 
We have already shown that the polynomials $f_1(e), f_2(e), f_3(e)$ are irreducible, so it only remains to show that the second condition of Schinzel's Hypothesis H is satisfied. The main part of the proof is done exactly as in the proof of Proposition \ref{prop1}. 

The proof is separated into three cases: $p=2$, $p=3$ and $p\geq5,$ $p$ prime. Cases for $p=2, 3$ are completely analogous as in Proposition \ref{prop1}.
% we show
%$f_1(e)f_2(e)f_3(e)\not\equiv0\pmod{2}$ for $p=2$ 
%and $e\in\mathbb{N}$ and $f_1(e)\not\equiv0\pmod{3}$ for $p=3$ and $e\in\mathbb{N}$ .\\
%\indent Let $p=3$. Analogously as in Proposition \ref{prop1}, we conclude that  $$f_1(e)\not\equiv0\pmod{3}$$ for every $e$ positive integer.\\
%The last case $p\geq5,$ $p$ prime, is separated in two subcases: $3\mid s$ or $3\nmid s$. If $3\mid s$, because of (\ref{eq:djelivostb}), we know $3\nmid A$ which implies $3\mid B$ or $\delta\equiv2\pmod{3}$. From $x\equiv1\pmod{B}$ we conclude $x\equiv1\pmod{3}.$ On the other side, because
%\begin{equation}\label{defx}
%x=\delta-\delta k-1,
%\end{equation}
%we get $$x\equiv\delta-\delta y_0-1\equiv1-2y_0\equiv1\pmod{3},$$
%or $3\nmid(1-\delta y_0)$
%and $3\nmid (\delta-\delta y_0-1)$, so congruences $f_2(e)\equiv0\pmod{3}$ and $f_3(e)\equiv0\pmod{3}$ do not have solutions. \\
%\indent If $3\nmid s$, congruences $f_2(e)\equiv0\pmod{3}$ and $f_3(e)\equiv0\pmod{3}$ have at most one solution modulo $3$. So, there exist a residue class modulo $3$ that does not satisfy these two congruences. We conclude that there exist infinitely many positive integers $e$ for which %$$f_1(e)f_2(e)f_3(e)\not\equiv0\pmod{3}.$$
%So, the second condition of Schinzel's hypothesis H is satisfied for $p=3$. \\
and for $p\geq5,$ $p$ prime, again we have two cases: $p\mid s$ or $p\nmid s$.\\
\indent  Let $p\mid s$. In this case $p\mid A$ or $p\mid B$. We deal with congruences of the form
$$f_1(e)\equiv2y_0^2-2y_0+1\pmod{p}, \enspace f_2(e)\equiv\delta y_0-1\pmod{p}, \enspace f_3(e)\equiv\delta y_0-\delta+1\pmod{p}.$$
\indent For $p\mid B$, we get %we have $\delta\equiv2\pmod{p}$, so because $x\equiv1\pmod{p}$ and (\ref{defx}), we get
$$x\equiv\delta-\delta y_0-1\equiv1-\delta y_0\equiv1-2y_0\equiv1\pmod{p}.$$ We have $y_0\equiv0\pmod{p}$ and congruences $f_1(e)\equiv0\pmod{p}, \enspace f_2(e)\equiv0\pmod{p}$ and $f_3(e)\equiv0\pmod{p}$ do not have solutions.\\
\indent For $p\mid A$, we have $p=p_i$ for some $i=1,\dots, r$ %and from (\ref{defx}) 
and we easily get
\begin{equation}\label{eq:poziv2}
x\equiv \delta-\delta y_0-1\equiv x_i\pmod{p}.
\end{equation}
Because $x_i\not\equiv\delta-2\pmod{p},$ we have $1-\delta y_0\equiv x_i-\delta+2\not\equiv0\pmod{p}$ so congruences $f_2(e)\equiv0\pmod{p}$ and $f_3(e)\equiv0\pmod{p}$ do not have solutions. Finally, we deal with the congruence $f_1(e)\equiv0\pmod{p},$ or $$2y_0^2-2y_0+1\equiv0\pmod{p}.$$
Analogously as in Proposition \ref{prop1} we get
$$\delta^2(2y_0^2-2y_0+1)\equiv2x_i^2-2x_i\delta+4x_i+\delta^2-2\delta+2\equiv2x_i(x_i-\delta+2)\pmod{p}$$ so $f_1(e)\equiv0\pmod{p}$ does not have any solutions.\\

\indent For $p\nmid s$, the procedure and conclusions remain the same as in Proposition \ref{prop1}. %Namely, $f_1(e)\equiv0\pmod{p}$ has at most two solutions modulo $p$, while congruences $f_2(e)\equiv0\pmod{p}$ and $f_3(e)\equiv0\pmod{p}$ have at most one solution modulo $p$. So, at least one remainder congruence class modulo $p$ does not satisfy none of the three congruences which means that there exist infinitely many positive integers $e$ for which
%$$f_1(e)f_2(e)f_3(e)\not\equiv0\pmod{p}.$$
So, polynomials $f_1, f_2, f_3$ satisfy the second condition of Schinzel's Hypothesis H.% that in our case implies that there exists infinitely many positive integers $k$ that simultaneously satisfy conditions (i)-(vi). %For such positive integers $k$ the equation (\ref{eq:28}) has infinitely many integers solutions $(X, Y)$ which we use to determine infinitely many positive integers $n$ for which there exist divisors $d_1, d_2$ such that $d_1+d_2=\delta n+\delta-2, \ \ \delta\equiv6\pmod{8}.$
\end{proof}

\begin{exmp}
\normalfont	For $\delta=14$ we get $A=85, \enspace B=3$. We exclude  $x_1\equiv2\pmod{5}$ and $x_2\equiv12\pmod{17}.$ So, let $x_1\equiv1\pmod{5}$ and $x_2\equiv1\pmod{17}.$ The system of the congruences we deal with is
	$$x\equiv1\pmod{5}, \enspace x\equiv1\pmod{17}, \enspace x\equiv 1\pmod{3}, \enspace x\equiv-15\pmod{112}.$$
	Solutions of the above system are $$x\equiv8161\pmod{28560}.$$
	%Tada je $k\equiv-582\pmod{2040}, \enspace k=2040u-582, \enspace u\in\mathbb{Z}$.
	Polynomials $f_1, f_2, f_3$ are of the form:
	$$a=f_1(u)=8323200u^2-4753200u+678613,$$
	$$b=f_2(u)=28560u-8149,$$
	$$c=f_3(u)=28560u-8161.$$
	It is shown that the first condition of Schinzel's Hypothesis H is satisfied. We explicitly show that the second condition of the hypothesis is satisfied, too.\\
	%U Propoziciji \ref{prop1} je pokazano da je prvi uvjet Schinzelove hipoteze H zadovoljen. Drugi uvjet te hipoteze je da za svaki prosti broj $p$ postoji prirodan broj $n$ za koji $p$ ne dijeli produkt ovih triju polinoma. U nastavku primjera pokazujemo eksplicitno i direktno da vrijedi i drugi uvjet Schinzelove hipoteze H za što je dovoljno uzeti samo prirodne brojeve $n=1, 2$.\\
	For $n=1$ we get
	$$f_1(1)\cdot f_2(1)\cdot f_3(1)=(181\cdot23473)\cdot(20411)\cdot(20399).$$
	For $n=2$ we get 
	$$f_1(2)\cdot f_2(2)\cdot f_3(2)=24465013\cdot(13\cdot3767)\cdot(173\cdot283).$$
	Obviously, it is obtained
	$$\textrm{gcd}(f_1(1)\cdot f_2(1)\cdot f_3(1), f_1(2)\cdot f_2(2)\cdot f_3(2))=1,$$
	so the second condition of Schinzel's hypothesis H is satisfied. Hence, there exist infinitely many positive integers $u$ such that $f_1(u), \enspace f_2(u), \enspace f_3(u)$
	are simultaneously prime.\\
%	\indent Neki od brojeva $k$ koji zadovoljavaju sve navedene uvjete su brojevi
%	$$119778, 519618, 1101018, 1200978, 1313178, 1531458....$$
%	Uočavamo da se radi o relativno velikim brojevima $k$ za koje bi računanje rješenja početne Pellove jednadžbe oblika $U^2-2abcV^2=1$ iziskivalo puno procesorskog vremena. Stoga, pokušavamo naći manje brojeve $k$ tako da postojeće kvadratne ostatke u kongruencijama modulo $A$ i/ili modulo $B$ zamijenimo nekim drugim, povoljnijim, kvadratnim ostacima. U slučaju u kojem je
For	$$x\equiv16\pmod{85}, \enspace x\equiv 1\pmod{3}, \enspace x\equiv-15\pmod{112},$$
	we get $$x\equiv28321\pmod{28560}, \ k\equiv-2022\equiv18\pmod{2040}.$$
%	For such $x$ we get positive integers $k$:
%	$$18, 1410, 2346, 2826, 5586, 5826, 6210, 8730, 10770, 11706,\dots,$$
	%što će zahtijevati puno manje procesorskog vremena od bilo kojeg broja $k$ kojeg smo dobili u prethodnom sustavu kongruencija.\\
For $k=18$ we get $$a=613, \enspace b=251, \enspace c=239.$$
	The appropriate Pell equation is $$U^2-73546514V^2=1,$$
	while $$U_0\approx2.91573\cdot10^{691}, \ V_0\approx3.39990\cdot10^{687}.$$
	Finally, we get $$n\approx 1.44598\cdot10^{690},$$
%	ili preciznije,\\
%	$n=\seqsplit{1445983146164751418797756017454577200956919556819320278972708170459068127851469415250804802958431655777147394277311082993782997964561762072626848559107453045453153075582338938657781577360736516412951379485029520472123143093259433992986249452156537004054711090887013415599964128931650513915379338394347664611051952978018602930329394580831790711345068632192320806330321217955451876148687887846579491728307958007336474342194979354982304351372006159857702551604034030108178781577525794046148340048145922700117608882306304927700533931536885254575992863429517000182011418559053839456590452634038860824550298682295527500216134628363123336518227678197271940422698954720840280527897736973115402959497}.$\\\\
for which divisors $d_1, d_2$ of $(n^2+1)/2$ are
$$d_1\approx7.16336\cdot10^{689} , \enspace d_2\approx2.02366\cdot10^{692}$$
%	\indent Traženi djelitelji $d_1, d_2$ broja $\frac{n^2+1}{2}$ koji zadovoljavaju svojstva Propozicije \ref{eq:predzadnja} su redom \\\\
%	$d_1=\seqsplit{7163369535409270074405569309039446033615193855452377627651337762020735488767278560344322560057018930498157303806183807079039042803649990617418230416511462255302617738581369063294356169327846818618083670359671541740784283370938182418099172798838896353688354485031639808932993088335450276619250572807035636518641337240767164412099665322346876962639689959605460254851644702242911580048328242158417637050754708180648096755773179809875320405920050215698671463490482168467924729922876719036845159057472460613424010478537850451525611988492582972867791491697735637697495634671258302276914876446242550549096246461993066284022467335818507890288545621353733462012534643245064038519756581928850399485,}$\\
%	te\\
%	$d_2=\seqsplit{20236600676771110593094178675055041367363258601615031527990263048664933054431804534950922918857986161949565362578548978105882932461061019026158461597087831174088840440414163772145647726880983382962701229120053615067983219022261137719389393157392679160412266917933156178590564811954771744538691486948060268918208700355019673860199424466322723081868321160732885828369645406674083354501582101609954466559260657394529992693973937789942385598802166187792137050992985939346035017355438239927039915514985445341033100341809731137355949429527900981091032296521540266910462364192082494089989422000097808993155085305675391936741862329747908203364898949140453432455772831448518863352048561041686791033485$}.\\
\end{exmp}
	\end{subsection}

\begin{section}{Open problems}
\begin{rmk}
\normalfont Let $\delta\equiv0\pmod{8}$. For $k\equiv3\pmod{8}$ we get $$a\equiv5\pmod{8}, \enspace b\equiv7\pmod{8}, \enspace c\equiv1\pmod{8}.$$
In this case we are not able to eliminate the factorization $5^-)$, or more precisely the case $$U_0+1=2^2at^2, \enspace U_0-1=2bcs^2$$ 
%Dakle, za $\delta\equiv0\pmod{8}$ prema $5^-)$ vrijedi
which implies $$U_0\equiv1\pmod{(\delta k -1)} \enspace\enspace \textrm{and} \enspace\enspace U_0\equiv1\pmod{(\delta k-\delta+1)},$$
which is in a contradiction with (\ref{eq:evoevo}).
% the condition that ensures  %odnosno s preduvjetom koji osigurava da $X$ zadovoljava kongruenciju
%$$X\equiv d\delta(\delta-2)\pmod{(2(\delta k-1)(\delta k-\delta+1))}.$$
For example, let $\delta=8$. For the case $5^-)$ we obtain $$\left(\frac{a}{b}\right)=\left(\frac{2k^2-2k+1}{8k-1}\right)=\left(\frac{8k^2-8k+4}{8k-1}\right)=\left(\frac{-7k+4}{8k-1}\right)=$$
$$=\left(\frac{-112k+64}{8k-1}\right)=\left(\frac{50}{8k-1}\right)=\left(\frac{2}{8k-1}\right)=1,$$
$$\left(\frac{a}{c}\right)=\left(\frac{2k^2-2k+1}{8k-7}\right)=\left(\frac{8k^2-8k+4}{8k-7}\right)=\left(\frac{-k+4}{8k-7}\right)=
$$
$$=\left(\frac{-16k+64}{8k-7}\right)=\left(\frac{50}{8k-7}\right)=\left(\frac{2}{8k-7}\right)=1.$$
%Dakle, uz pomoć Legendreovih simbola nismo u mogućnosti eliminirati slučaj $5^-)$, a time ni uvjet $\left(\frac{a}{c}\right)=-1$.
%S obzirom da taj uvjet ne može biti eliminiran, rješenja $(X, Y)$ koja bi zadovljavala sve navedene uvjete ne bi zadovoljavala i kongruenciju $(\ref{eq:lalala})$ koju je nužno zadovoljiti kako bi ta rješenja bila cjelobrojna. Sve pokazano ukazuje na to da bi ovaj problem trebali vjerojatno rješavati drugačijim pristupom.
Furthermore, even though we are able to find solutions for relatively small $\delta$, we cannot determine solutions for $\delta=40$, so we are not sure whether there exist infinitely many positive and odd integers $n$ for which there exist divisors $d_1, d_2>1$ of $(n^2+1)/2$ such that $$d_1 + d_2=\delta n +\delta - 2, \ \delta\equiv0\pmod{8}.$$ 
%Nadalje, iako smo za male vrijednosti $\delta$ eksperimentalno pronašli sporadična rješenja, već za $\delta=40$, nismo našli nijedno rješenje pa u ovom slučaju nemamo niti dobro fundiranu hipotezu o tome što bi trebalo vrijediti.\\\\	
	\end{rmk}

\begin{rmk}
\normalfont Let $\delta\equiv2\pmod{8}$. If we apply the analogous method presented in cases $\delta\equiv4,6\pmod{8}$, we get more complicated conditions on Legendre's symbols. For example, we get $$\left(\frac{a}{c}\right)-1, \enspace \left(\frac{a}{b}\right)=1, \enspace \left(\frac{c}{b}\right)=1,$$
so we cannot use Chinese remainder theorem and Schinzel's Hypothesis H in order to get similar conclusions.	
\end{rmk}

\end{section}

\section*{Acknowledgement}
	We would like to thank Professor Andrej Dujella for many valuable suggestions and a great help with the preparation of this article.


\begin{thebibliography}{}
	
	\bibitem{ayad0}
	M.~Ayad,
	\emph{Critical points, critical values of a prime polynomial}, Complex Var. Elliptic Equ. \textbf{51} (2006), 143--160.
	
\bibitem{ayad}
	M.~Ayad and F.~Luca,
	\emph{Two divisors of $(n^2+1)/2$ summing up to $n+1$}, J. Th\'{e}or. Nombres Bordeaux \textbf{19} (2007), 561--566.
	
\bibitem{bujacic}
	S.~Buja\v{c}i\'{c}, \emph{Two divisors of $(n^2+1)/2$ summing up to $\delta n+\varepsilon$, for $\delta$ and $\varepsilon$ even}, Miskolc Math. Notes \textbf{15} (2) (2014), 333--344.
	
\bibitem{dujella}
	A.~Dujella and F.~Luca, \emph{On the sum of two divisors of $(n^2+1)/2$},  Period. Math. Hungar. \textbf{65} (2012), 83--96.
	
\bibitem{schinzel}
A.~Schinzel and W.~Sierpi\'{n}ski, \emph{Sur certaines hypoth\`{e}ses concernant les nombres premiers}, Acta Arith. \textbf{4} (1958), 185--208, Corrigendum, 5 (1959), 259.
\end{thebibliography}
\end{document}